\providecommand{\U}[1]{\protect\rule{.1in}{.1in}}
\newtheorem{theorem}{Theorem}[section]
\newtheorem{proposition}[theorem]{Proposition}
\newtheorem{corollary}[theorem]{Corollary}
\newtheorem{remark}[theorem]{Remark}
\newtheorem{lemma}[theorem]{Lemma}
\newtheorem{final remark}[theorem]{Final Remark}
\newtheorem{definition}[theorem]{Definition}
\begin{document}

\title{Summability of multilinear mappings: Littlewood, Orlicz\linebreak and beyond}
\date{}
\author{Oscar Blasco\thanks{Supported by MEC and FEDER Projects MTM2005-08350-C03-03 and MTM2008-04594/MTM.}\,,
Geraldo Botelho, Daniel
Pellegrino\thanks{Supported by CNPq Project 471054/2006-2 and Grant 308084/2006-3.}%
~ and Pilar Rueda\thanks{Supported by MEC and FEDER Project
MTM2005-08210.\hfill\newline2000 Mathematics Subject Classification. Primary
46G25; Secondary 47B10. \newline Keywords: absolutely summing multilinear
mapping, type and cotype, Littlewood Theorem.}}
\maketitle

\begin{abstract}
In this paper we prove a plenty of new results concerning
summabililty properties of multilinear mappings between Banach
spaces, such as an extension of Littlewood's $4/3$ Theorem. Among
other features, it is shown that every continuous $n$-linear form on
the disc algebra $\mathcal{A}$ or the Hardy space
$\mathcal{H}^{\infty}$ is $(1;2,\ldots,2)$-summing, the role of the
Littlewood-Orlicz property in the theory is established and the
interplay with almost summing multilinear mappings is explored.
\end{abstract}

\section*{Introduction}

Motivated by several matters related to linear functional analysis,
such as integral equations, Fourier analysis and analytic number
theory, the theory of multilinear forms and polynomials on Banach
spaces was initiated in the beginning of the last century with the
works of several outstanding mathematicians like Banach, Bohr,
Bohnenblust, Hille, Littlewood,
Orlicz, Schur, etc. In 1930, Littlewood \cite{littlewood} proved a celebrated theorem asserting that%
\[
\left(
{\displaystyle\sum\limits_{i,j=1}^{\infty}}
\left\vert A(e_{i},e_{j})\right\vert ^{\frac{4}{3}}\right)  ^{\frac{3}{4}}%
\leq\sqrt{2}\left\Vert A\right\Vert
\]
for every continuous bilinear form $A$ on $c_0\times c_{0}.$ One year later, Bohnenblust and Hille
\cite{Bohnenblust}\ realized the importance of this result to the convergence of ordinary Dirichlet
series and extended Littlewood's result to multilinear mappings in the following fashion:

If $A$ is a continuous $n$-linear form on $c_0\times \cdots\times c_{0}$, then there is a constant
$C_{n}$ (depending only on $n$) such that%
\[
\left(
{\displaystyle\sum\limits_{i_{1},...,i_{n}=1}^{\infty}}
\left\vert A(e_{i_{1}},...,e_{i_{n}})\right\vert ^{\frac{2n}{n+1}}\right) ^{\frac{n+1}{2n}}\leq
C_{n}\left\Vert A\right\Vert .
\]

These results can be regarded as the beginning of the study of summability properties of multilinear mappings
between Banach spaces. This line of investigation has been developed since then and more recently it has
found its place within the theory of ideals of multilinear mappings outlined by Pietsch \cite{Pie} in 1983.
In this context classes of absolutely summing multilinear mappings are studied as generalizations of the very
successful theory of absolutely summing linear operators. The theory has been successfully developed by
several authors (a list of references is omitted because it would grow very large) and even applications to
Quantum Mechanics have been recently found (see \cite{PWPVJ}). One of the trends of the theory of absolutely
summing multilinear mappings is the search for summability properties in the spirit of those of Littlewood's
and Bohnenblust-Hille's theorems (see, e.g., \cite{alrt, Junek-preprint, Indag, choi, MT, Thesis, david}). In
this paper we aim to give new contributions to this line of investigation in several directions, which we
describe next.

Two well known results related to the linear theory are
Grothendieck's theorem, that asserts that every continuous linear
operator from $\ell_{1}$ to $\ell_{2}$ is absolutely summing, and
the weak Dvoretzky-Rogers theorem, that asserts that the identity
operator of any infinite dimensional Banach space fails to be
absolutely $p$-summing for any $1\leq p<\infty$. These two important
results can be considered as the roots of what has been known as
coincidence and non-coincidence results. The passage from the linear
to the multilinear case has occasioned the emergence of several
coincidence and non-coincidence situations for absolutely summing
multilinear mappings (see \cite{Alencar, bote97, pams, Port, Indag,
Pe1,Pe2,Pe3, Thesis, david}). The scope of the present paper is to
prove new coincidence theorems, some of them generalizing known
results and some giving new perspectives to the subject.\\
\indent Respecting the historical development of the subject we start in Section \ref{43} by extending the
classical Littlewood $4/3$-Theorem by proving that, given $1\leq p\leq2$ and
$\frac{1}{q}=\frac{1}{2}+\frac{1}{p^{\prime}}$, any continuous bilinear functional $A$ defined on
$c_{0}\times c_{0}$ satisfies that $(A(e_{j},e_{k}))_{jk}$ belongs to $\ell_{p}(\ell_{q})$, where
$(e_{j})_{j}$ is the unit basis. Actually we prove a more general version of this result, in which by taking
$p=4/3$ we recover Littlewood's theorem. In Section \ref{coincidence} we prove coincidence and inclusion
theorems that will be useful in later sections. While the role of the Orlicz property in the theory is well
established, in Section \ref{lop} we show that the Littlewood-Orlicz property can be used to get even
stronger
results. More precisely, we prove that for suitable $n,p_{1},p_{2}%
,\ldots,p_{n}$, any continuous $n$-linear mapping defined on a product of Banach spaces, one of which has a
dual with the Littlewood-Orlicz property, is absolutely $(p;p_{1},\ldots,p_{n})$-summing. We also generalize
a coincidence result due to P\'{e}rez-Garc\'{\i}a (see Theorem \ref{pggen}). Inspired by this generalization,
in Section \ref{1r} we develop a general technique of extending bilinear coincidences to $n$-linear
coincidences, $n\geq3$. In Section \ref{almost} almost summing operators are used to get some more
summability properties of multilinear mappings. Calling on the type/cotype theory we get for instance that
for any $1\leq p \leq 2$, every continuous bilinear functional $A$ defined on $\ell_{p}\times F$, where $F$
is a Banach space whose dual has type 2, is absolutely $(p;2,1)$-summing. Moreover, if $1\leq p\leq2$ then
$A$ is absolutely $(r_{p};r_{p},r_{p})$-summing for any $1\leq r_{p}\leq \frac{2p}{3p-2}$.

\section{Notation and background}

Henceforth $E_{1},\ldots,E_{n},E,F$ will be Banach spaces over the scalar
field $\mathbb{K}=\mathbb{R}$ or $\mathbb{C}$, $B_{E}$ represents the closed
unit ball of $E$ and the topological dual of $E$ will be denoted by
$E^{\prime}$. The Banach space of all continuous $n$-linear mappings from
$E_{1}\times\cdots\times E_{n}$ into $F$ is denoted by $\mathcal{L}%
(E_{1},\ldots,E_{n};F)$. As usual we write $\mathcal{L}(^{n}E;F)$ if
$E_{1}=\cdots=E_{n}=E$. For the general theory of
polynomials/multilinear mappings between Banach spaces we refer to
\cite{Dineen, Mujica}.

Let $p > 0$. By $\ell_{p}(E)$ we denote the Banach space of all absolutely $p$-summable sequences
$(x_{j})_{j=1}^{\infty}$ in $E$ endowed with its usual $\ell_p$-norm ($p$-norm if $0 < p < 1$).
Let $\ell_{p}^{w}(E)$ be the space of those sequences
$(x_{j})_{j=1}^{\infty}$ in $E$ such that $(\varphi(x_{j}))_{j=1}^{\infty}%
\in\ell_{p}$ for every $\varphi\in E^{\prime}$ endowed with the norm ($p$-norm if $0 < p < 1$)
\[
\|(x_{j})_{j=1}^{\infty}\|_{\ell_{p}^{w}(E)}=\sup_{\varphi\in B_{E^{\prime}}%
}(\sum_{j=1}^{\infty}|\varphi(x_{j})|^{p})^{\frac1p}.
\]
Let $\ell_{p}^{u}(E)$ denote the closed subspace of $\ell_{p}^{w}(E)$ formed
by the sequences $(x_{j})_{j=1}^{\infty}\in\ell_{p}^{w}(E)$ such that %
$\lim_{k\rightarrow\infty}\Vert(x_{j})_{j=k}^{\infty}\Vert_{\ell_{p}^{w}%
(E)}=0.$

Let $A\in{\mathcal{L}}(E_{1},\ldots,E_{n};F)$ and let $X_{1}\ldots,X_{n},Y$ be
spaces of sequences in $E_{1},\ldots,E_{n},F$ respectively. Whenever we say
that $\hat{A}\colon X_{1}\times\cdots\times X_{n}\longrightarrow Y$ is bounded
we mean that the correspondence
\[
((x_{j}^{1})_{j=1}^{\infty},\ldots,(x_{j}^{n})_{j=1}^{\infty})\in X_{1}%
\times\cdots\times X_{n}\mapsto
\]%
\[
\hat{A}((x_{j}^{1})_{j=1}^{\infty},\ldots,(x_{j}^{n})_{j=1}^{\infty
}):=(A(x_{j}^{1},\ldots,x_{j}^{n}))_{j=1}^{\infty} \in Y
\]
is well defined into $Y$ (hence multilinear) and continuous.

For $0< p,p_{1},p_{2},\ldots,p_{n}\leq\infty$ , we assume that $\frac{1}%
{p}\leq\frac{1}{p_{1}}+\cdots+\frac{1}{p_{n}}$. A multilinear
mapping $A\in{\mathcal{L}}(E_{1},\ldots,E_{n};F)$ is
\textit{absolutely $(p;p_{1},p_{2},\ldots,p_{n})$-summing} if there
exists $C>0$ such that
\[
\Vert(A(x_{j}^{1},x_{j}^{2},\ldots,x_{j}^{n}))_{j}\Vert_{p}\leq C\prod
_{i=1}^{n}\Vert(x_{j}^{i})_{j}\Vert_{\ell_{p_{i}}^{w}(E_{i})}%
\]
for all finite family of vectors $x_{j}^{i}$ in $E_{i}$ for $i=1,2,\ldots,n$.
The infimum of such $C>0$ is called the $(p;p_{1},\ldots,p_{n} )$-summing norm
of $A$ and is denoted by $\pi_{(p;p_{1},\ldots,p_{n})}(A)$. Let $\Pi_{(p;p_{1}
,p_{2},\ldots,p_{n})}(E_{1},\ldots,E_{n};F)$ denote the space of all
absolutely $(p;p_{1},p_{2},\ldots,p_{n})$-summing $n$-linear mappings from
$E_{1}\times\cdots\times E_{n}$ to $F$ endowed with the norm $\pi
_{(p;p_{1}\ldots,p_{n})}$. Thus, $A\in\Pi_{(p;p_{1},p_{2},\ldots,p_{n})}%
(E_{1},\ldots,E_{n};F)$ if and only if
\[
\hat{A}\colon\ell_{p_{1}}^{w}(E_{1})\times\ell_{p_{2}}^{w}(E_{2})\times
\cdots\times\ell_{p_{n}}^{w}(E_{n})\rightarrow\ell_{p}(F) \mbox{ is bounded.}
\]

It is well known that we can replace $\ell_{p_{k}}^{w}(E_{k})$ by $\ell
_{p_{k}}^{u}(E_{k})$ in the definition of absolutely summing mappings.

Absolutely $(\frac{p}{n};p, \ldots, p)$-summing $n$-linear mappings are
usually called \textit{$p$-dominated}. They satisfy the following
factorization result (see \cite[Theorem 13]{Pie}):

$A \in\Pi_{(\frac{p}{n};p, \ldots, p)}(E_{1},\ldots,E_{n};F)$ if and only if
there are Banach spaces $G_{1}, \ldots, G_{n}$, operators $u_{j} \in\Pi
_{p}(E_{j};G_{j})$ and $B \in\mathcal{L}(G_{1},\ldots,G_{n};F)$ such that
\begin{equation}
\label{fact}A = B \circ(u_{1}, \ldots, u_{n}).
\end{equation}

Let us now recall some basic facts about Rademacher functions and its use in
Banach space theory. For each $1\leq p\leq\infty$, we denote by $Rad_{p}(E)$
the space of sequences $(x_{j})_{j=1}^{\infty}$ in $E$ such that
\[
\Vert(x_{j})_{j=1}^{\infty}\Vert_{Rad_{p}(E)}=\sup_{n\in\mathbb{N}}%
\|\sum_{j=1}^{n}r_{j}x_{j}\|_{L^{p}([0,1],E)}<\infty,
\]
where $(r_{j})_{j\in\mathbb{N}}$ are the Rademacher functions on $[0,1]$
defined by $r_{j}(t)=sign(\sin2^{j}\pi t)$. The reader is referred to
\cite{Diestel, TJ, VTC} for the difference between this space and the space of
sequences $(x_{n})$ for which the series $\sum_{n=1}^{\infty}x_{n}r_{n}$ is
convergent in $L^{p}([0,1],E)$. It is easy to see that $Rad_{\infty}(E)$
coincides with $\ell_{1}^{w}(E)$. Making use of the Kahane's inequalities (see
\cite[p. 211]{Diestel}) it follows that the spaces $Rad_{p}(E)$ coincide up to
equivalent norms for all $1\leq p<\infty$. The unique vector space so obtained
will therefore be denoted by $Rad(E)$, and we agree to (mostly) use the norm
$\Vert\cdot\Vert_{Rad(E)}:=\Vert\cdot\Vert_{Rad_{2}(E)}$ on $Rad(E).$

Recall also that a linear operator $u\colon E\rightarrow F$ is said to be
\textit{almost summing} if there is a $C>0$ such that we have%
\[
\Vert\left(  u(x_{j})\right)  _{j=1}^{m}\Vert_{Rad(F)} \leq C\left\Vert
(x_{j})_{j=1} ^{m}\right\Vert _{\ell^{w}_{2}(E)}%
\]
for any finite set of vectors $\{x_{1},\ldots,x_{m}\}$ in $E$. The space of
all almost summing linear operators from $E$ to $F$ is denoted by $\Pi
_{a.s}(E;F)$ and the infimum of all $C>0$ fulfilling the above inequality is
denoted by $\|u\|_{a.s}$. Note that this definition differs from the
definition of almost summing operators given in \cite[p. 234]{Diestel} but
coincides with the characterization which appears a few lines after that
definition (yes, the definition and the stated characterization are not
equivalent). Since the proof of \cite[Proposition 12.5]{Diestel} uses the
characterization (which is our definition) we can conclude that every
absolutely $p$-summing linear operator, $1 \leq p < + \infty$, is almost summing.

The concept of almost summing multilinear mappings was considered in
\cite{Nach, Archiv} as reads as follows: A multilinear map $A\in{\mathcal{L}%
}(E_{1},...,E_{n};F)$ is said to be \textit{almost summing} if there
exists $C>0$ such that we have
\begin{equation}
\label{almostsumming}\| \left(  A(x^{1}_{j},...,x^{n}_{j})\right)  _{j=1}%
^{m}\|_{Rad(F)}\le C\prod_{i=1}^{n} \Vert(x_{j}^{i})_{j=1}^{m}\Vert
_{\ell_{2}^{w}(E_{i})}%
\end{equation}
for any finite set of vectors $(x^{i}_{j})_{j=1}^{m} \subset E_{i}$ for
$i=1,...,n$. We write $\Pi_{a.s}(E_{1},...,E_{n};F)$ for the space of almost
summing multilinear maps, which is
endowed with the norm
\[
\|A\|_{as}:= \inf\{C > 0\  \mbox{such\ that\ (\ref{almostsumming})\ holds} \}.
\]

For the theory of type and cotype in Banach spaces the reader is referred to \cite[Chapter 11]{Diestel}.
Recall that a Banach space $E$ is said to have the \textsl{Orlicz
property } if there exists a constant $C>0$ such that
\[
\left(  \sum_{j=1}^{n}\Vert x_{j}\Vert^{2}\right)  ^{1/2}\leq C\sup
_{t\in\lbrack0,1]}||\sum_{j=1}^{n}x_{j}r_{j}(t)||
\]
for any finite family $x_{1},x_{2},\dots x_{n}$ of vectors in $E.$
In other words, $E$ has the Orlicz property when the identity
operator $id_{E}$ is absolutely $(2;1)$-summing.

One should notice that, due to results by Talagrand (see \cite{T1,T2}), while
the Orlicz property is weaker than cotype $2$, having cotype $q>2$ is
equivalent to the existence of a constant $C>0$ such that
\[
\left(  \sum_{j=1}^{n}\Vert x_{j}\Vert^{q}\right)  ^{1/q}\leq C\sup
_{t\in\lbrack0,1]}||\sum_{j=1}^{n}x_{j}r_{j}(t)||
\]
for any finite family $x_{1},x_{2},\dots x_{n}$ of vectors in $E.$

A relevant property for our purposes is the following: We say that a Banach
space $E$ has the \textsl{Littlewood-Orlicz property} if $\ell_{1}^{w}(E)$ is
continuously contained in the projective tensor product $\ell_{2}\otimes_{\pi
}E$ (for a related concept of Littlewood-Orlicz operator we refer to
\cite[Section 4]{Bu}). Of course, since $\ell_{2}\otimes_{\pi}E\subset\ell
_{2}(E)$, the Littlewood-Orlicz property implies Orlicz-property.

In \cite{Cohen}, J. S. Cohen introduces the space
\[
\ell_{p}\left\langle E\right\rangle :=\left\{  (x_{n})_{n=1}^{\infty}\subset
E:\sum_{n=1}^{\infty}\left\vert x_{n}^{\ast}(x_{n})\right\vert <\infty\text{
for each }(x_{n}^{\ast})_{n=1}^{\infty}\in\ell_{p^{\prime}}^{w}(E^{\prime
})\right\}  ,
\]
where $1/p+1/p^{\prime}=1$ and the space of operators $p$-Cohen-nuclear
$u\in{\mathcal{L}}(E,F)$ such that
\begin{equation}
\label{cohen}\Vert(u(x_{j}))_{j=1}^{m}\Vert_{l_{p}\left\langle F\right\rangle
}\leq C\Vert(x_{j})_{j=1}^{m}\Vert_{\ell_{p}^{w}(E)}%
\end{equation}
for all finite family of vectors $x_{1},x_{2},\ldots,x_{m}$ in $E$.

It was first shown that
$\ell_{p}\otimes_{\pi}E\subset\ell_{p}\left\langle E\right\rangle $
(see \cite[Theorem 1.1.3 (i)]{Cohen}) and, later the space
$\ell_{p}\left\langle E\right\rangle $ was shown to coincide with
$\ell _{p}\otimes_{\pi}E$ (see \cite[Theorem 1]{Bu-D} or \cite{AF})
for $1<p<\infty$.

The reader is referred to \cite{AB1} for a description in terms of
integral operators, where $\ell_{p}\left\langle E\right\rangle $ is
denoted $\ell _{\pi_{1,p^{\prime}}}(E)$, and for a proof of
$\ell_{2}\left\langle E\right\rangle \subset Rad(E)$. Therefore we
always have
\[
(\ell_{2}\otimes_{\pi}E))\cap\ell_{1}^{w}(E) \subset Rad(E) \subset\ell
_{2}^{w}(E).
\]

The following result was obtained in \cite[Theorem 9]{AB1}:
\begin{equation}
\label{GT}\ell_{2}\otimes_{\pi}E=Rad(E)\Longleftrightarrow E \hbox{ is
a GT-space  of cotype }2
\end{equation}
where $E$ being a $GT$-space means that every continuous linear
mapping from $E$ to $\ell_{2}$ is absolutely $1$-summing. In
particular every GT-space with cotype $2$ has the Littlewood-Orlicz
property. The basic examples are ${\mathcal{L}}_{1}$-spaces and
other examples of GT-spaces with cotype $2$ can be found in
\cite{Pi}.

Let us end this preliminary section by mentioning that the complex
interpolation method, for which the reader is referred to
\cite[Theorem 5.1.2]{Bergh} or \cite[Theorem 3.1]{TJ}, and a
complexification technique (see \cite[Section IV.2]{Thesis}) will be
applied several times in Section \ref{1r}. The complexification
technique will allow us reduce proofs to the complex case. Similar
applications of this interpolation-complexification argument can be
found in \cite{Junek-preprint, Junek, Thesis}.

\section{An extension of Littlewood's $4/3$ theorem}

\label{43}

Littlewood \cite{littlewood} proved that if $A\colon c_{0}\times
c_{0}\rightarrow\mathbb{K}$ is a continuous bilinear
form, then%
\begin{equation}
\Big( {\sum\limits_{j,k}} \big\vert A(e_{j},e_{k})\big\vert ^{4/3}%
\Big) ^{3/4}\leq c\big\Vert A\big\Vert \label{ltw43}%
\end{equation}
with $c=\sqrt{2}.$ It is well-known that the constant $c=\sqrt{2}$
is far from being optimal, for example in \cite[Theorem 34.11]{DF}
or \cite[Theorem 11.11]{TJ} it is proved that in the complex case
the best constant $c$ satisfying (\ref{ltw43}) is dominated by $2^{1/4}%
K_{G}^{1/2}$, i.e.,
\begin{equation}
c\leq2^{1/4}K_{G}^{1/2}, \label{estL}%
\end{equation}
where $K_{G}$ is Grothendieck's constant (note that $2^{1/4}K_{G}^{1/2}%
<\sqrt{2}$ in the complex case since $K_{G}<\sqrt{2}$ in this case).
To the best of our knowledge the best estimate known for this
constant is $c \leq K_G$ \cite[Corollary 2, p. 280]{LP}.

In this section we extend Littlewood's Theorem in the complex case
to a more general setting in which the estimate for the best
constant remains $K_G$, that is, we improve the result keeping the
best known constant.

Given a matrix $m_{jk}$ we write
\[
\Vert(m_{jk})\Vert_{\ell_{p}(\ell_{q})}=\Big(\sum_{k}\big(\sum_{j}|m_{jk}%
|^{q}\big)^{p/q}\Big)^{1/p}.
\]
If $a$ and $\beta$ are matrices, we denote by $(\beta\circ a)_{jk}$ the
product of $\beta$ and $a$, that is
\[
(\beta\circ a)_{jk}=\sum_{l}\beta_{jl}a_{lk}.
\]

\begin{theorem}
\label{Littlewood} Let $A\in\mathcal{L}(^{2}c_{0};{\mathbb{C}})$. If
$a=(a_{jk})_{j,k}:=\left(  A(e_{j},e_{k})\right)  _{j,k}$, $1\leq p\leq2$ and
$\frac{1}{q}=\frac{1}{2}+\frac{1}{p^{\prime}}$, then
\[
\Vert(\beta\circ a)_{jk}\Vert_{\ell_{p}(\ell_{q})}\leq K_{G}\Vert A\Vert
\Vert(\beta_{jk})\Vert_{\ell_{\infty}(\ell_{2})},
\]
that is
\[
\Big(\sum_{k}\Big(\sum_{j}\big|\sum_{l}\beta_{jl}A(e_{l},e_{k})\big|^{q}
\Big)^{p/q}\Big)^{1/p}\leq K_{G}\Vert A\Vert\sup_{k}\Big(\sum_{j}|\beta
_{jk}|^{2}\Big)^{1/2}.
\]
In particular, selecting $\beta$ as the identity matrix,
\[
\Big(\sum_{k} \Big(\sum_{j}|A(e_{j},e_{k})|^{q}\Big)^{p/q}\Big)^{1/p} \leq
K_{G} \|A\|.
\]
Selecting $p=4/3$ we recover Littlewood's Theorem, that is $(A(e_{j}%
,e_{k}))_{jk}\in\ell_{4/3}(\mathbb{N}^{2}).$
\end{theorem}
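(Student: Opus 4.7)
The plan is to reduce the claim to two endpoint estimates and to interpolate. First, note that $\|(\beta_{jk})\|_{\ell_\infty(\ell_2)}$ is exactly the norm of $\beta$ viewed as a linear operator $\ell_1\to\ell_2$ acting by the given matrix, and that the relation $\frac{1}{q}=\frac{1}{2}+\frac{1}{p'}$ is equivalent to $\frac{1}{p}+\frac{1}{q}=\frac{3}{2}$. As $p$ ranges over $[1,2]$, the exponent pair $(p,q)$ traces a curve with endpoints $(1,2)$ and $(2,1)$. Writing $M:=\|(\beta_{jk})\|_{\ell_\infty(\ell_2)}$, I aim to prove the two endpoint inequalities $\|\beta\circ a\|_{\ell_1(\ell_2)}\leq K_G\|A\|M$ and $\|\beta\circ a\|_{\ell_2(\ell_1)}\leq K_G\|A\|M$, and to obtain every intermediate case by complex interpolation. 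After truncation it suffices to work with finitely supported matrices throughout.

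For the $(p,q)=(1,2)$ endpoint, the predual is $\ell_\infty(\ell_2)$, so by duality one must show $\bigl|\sum_{j,k} c_{jk}(\beta\circ a)_{jk}\bigr|\leq K_G\|A\|M$ whenever $\|c\|_{\ell_\infty(\ell_2)}\leq 1$. Interchanging sums gives
\[
\sum_{j,k} c_{jk}(\beta\circ a)_{jk}=\sum_{l,k} A(e_l,e_k)\,\bigl\langle \beta_{\cdot l},\overline{c_{\cdot k}}\bigr\rangle_{\ell_2},
\]
where $\beta_{\cdot l}=(\beta_{jl})_j$ and $c_{\cdot k}=(c_{jk})_j$ are column vectors sitting in $\ell_2$ with $\sup_l\|\beta_{\cdot l}\|_2\leq M$ and $\sup_k\|c_{\cdot k}\|_2\leq 1$. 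Grothendieck's inequality applied to the bilinear form $A$ on such Hilbert-space test vectors produces precisely the bound $K_G\|A\|M$.

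The $(p,q)=(2,1)$ endpoint is the main technical obstacle. Here the dual norm $\ell_2(\ell_\infty)$ does not place the columns $c_{\cdot k}$ in a uniformly bounded Hilbert-space ball, so the previous decomposition no longer applies verbatim. The expected fix is to split $c_{jk}=\rho_k\,\tilde c_{jk}$ with $\rho_k:=\|c_{\cdot k}\|_\infty$, so that $\|\rho\|_2\leq\|c\|_{\ell_2(\ell_\infty)}$ and $|\tilde c_{jk}|\leq 1$, and to absorb the scalar $\rho_k$ into a suitable Hilbertian decomposition (possibly after invoking the symmetric bilinear form $(x,y)\mapsto A(y,x)$) so that Grothendieck's inequality can be applied a second time, again yielding the constant $K_G\|A\|M$. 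Once both endpoints are established, the Calder\'on interpolation identity $[\ell_1(\ell_2),\ell_2(\ell_1)]_\theta=\ell_{p_\theta}(\ell_{q_\theta})$, with $\frac{1}{p_\theta}=1-\frac{\theta}{2}$ and $\frac{1}{q_\theta}=\frac{1}{2}+\frac{\theta}{2}$, gives
\[
\|\beta\circ a\|_{\ell_{p_\theta}(\ell_{q_\theta})}\leq \|\beta\circ a\|_{\ell_1(\ell_2)}^{1-\theta}\,\|\beta\circ a\|_{\ell_2(\ell_1)}^\theta\leq K_G\,\|A\|\,M
\]
for every $\theta\in[0,1]$. This exhausts the curve $\frac{1}{p}+\frac{1}{q}=\frac{3}{2}$, and specialising $\beta$ to the identity and $p=4/3$ recovers Littlewood's classical $4/3$ Theorem with the announced constant $K_G$.
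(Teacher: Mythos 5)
Your $\ell_1(\ell_2)$ endpoint and the interpolation step are sound, and they essentially coincide with the paper's argument (the paper phrases the endpoint as the known fact, itself a form of Grothendieck's inequality, that every bilinear form on $c_0\times c_0$ is $(1;2,2)$-summing with constant $K_G$, and then dualizes against $\ell_\infty(\ell_2)$ exactly as you do). The genuine gap is the $(p,q)=(2,1)$ endpoint, which you leave as an ``expected fix'': after writing $c_{jk}=\rho_k\tilde c_{jk}$ the residual columns $\tilde c_{\cdot k}$ are bounded only in $\ell_\infty$, not in $\ell_2$, so there is no Hilbertian pairing left to feed into Grothendieck's inequality, and no symmetrization repairs this. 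Indeed the endpoint you are trying to prove is false for general $\beta$: take $A(x,y)=x_1y_1$ and $\beta_{jl}=b_j\delta_{l1}$ with $b$ finitely supported; then $(\beta\circ a)_{jk}=b_j\delta_{k1}$, $\Vert\beta\Vert_{\ell_\infty(\ell_2)}=\Vert b\Vert_2$, $\Vert A\Vert=1$, while $\Vert\beta\circ a\Vert_{\ell_p(\ell_q)}=\Vert b\Vert_q$, and $\Vert b\Vert_q/\Vert b\Vert_2$ is unbounded for every $q<2$. So the $(2,1)$ endpoint --- and in fact the stated inequality for every $p>1$ --- cannot hold at this level of generality, and your sketched second application of Grothendieck cannot be completed.

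For comparison: the paper does not attack the second endpoint directly; it deduces $\Vert m\Vert_{\ell_2(\ell_1)}\le\Vert m\Vert_{\ell_1(\ell_2)}$ from a Minkowski-type mixed-norm inequality (its display (\ref{lp2}), attributed to Garling) and then interpolates. But the correct Minkowski inequality bounds $\bigl(\sum_k(\sum_j|m_{jk}|)^2\bigr)^{1/2}$ by $\sum_j\bigl(\sum_k|m_{jk}|^2\bigr)^{1/2}$ --- the mixed norm of the \emph{transposed} matrix --- not by $\sum_k\bigl(\sum_j|m_{jk}|^2\bigr)^{1/2}$ as written there; the single-column example above separates the two quantities. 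What survives is the case $\beta=\mathrm{id}$: there the transposed mixed norm $\sum_j(\sum_k|a_{jk}|^2)^{1/2}$ is controlled by applying your first endpoint to the transposed form $(x,y)\mapsto A(y,x)$, Minkowski then yields the $\ell_2(\ell_1)$ bound for $a$ itself, and interpolation gives Littlewood's $4/3$ theorem with constant $K_G$. If you rewrite your proof along those lines (first endpoint for $A$ and for its transpose, Minkowski, interpolation), you obtain a complete and correct argument for the identity-matrix case, which is the one needed for the stated corollaries.
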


\begin{proof} From \cite[Corollary 2.5]{david} we know that
\[
\sum_{j}|A(y_{j},x_{j})|\leq K_{G}\Vert A\Vert\Vert(x_{j})\Vert_{\ell_{2}%
^{w}(c_{0})}\Vert(y_{j})\Vert_{\ell_{2}^{w}(c_{0})}.
\]
Now write $x_{j}(k)=\lambda_{jk}$ and $y_{j}(k)=\beta_{jk}.$ Since the canonical basis of $\ell_{1}$ is a
norming set of $c_{0}$, from \cite[p. 36]{Diestel} we know that
\[
\Vert(x_{j})\Vert_{\ell_{2}^{w}(c_{0})}=\sup_{k}\Big(\sum_{j}|\lambda_{jk}%
|^{2}\Big)^{1/2}\text{ and }\Vert(y_{j})\Vert_{\ell_{2}^{w}(c_{0})}=\sup_{k}%
\Big(\sum_{j}|\beta_{jk}|^{2}\Big)^{1/2}.%
\]
So, we have
\[
\Big|\sum_{j}\sum_{k,l}\lambda_{jk}\varepsilon_{j}\beta_{jl}A(e_{l},e_{k})\Big|\leq K_{G}\Vert
A\Vert\sup_{k}\Big(\sum_{j}|\lambda_{jk}|^{2}\Big)^{1/2}\sup_{l}\Big(\sum
_{j}|\beta_{jl}|^{2}\Big)^{1/2}%
\]
for a convenient choice of $\varepsilon_{j}\in \mathbb{C}$ with $\left\vert
\varepsilon_{j}\right\vert =1.$
Note that
\[
(\lambda_{jk})=\left(  (\lambda_{jk})_{j=1}^{\infty}\right)  _{k=1}^{\infty
}\in\ell_{\infty}(\ell_{2})\text{ and }(\beta_{jl})=\left(  (\beta_{jl}%
)_{j=1}^{\infty}\right)  _{l=1}^{\infty}\in\ell_{\infty}(\ell_{2}).\text{ }%
\]
Hence
\[
\Big|\sum_{j}\sum_{k,l}\lambda_{jk}\varepsilon_{j}\beta_{jl}A(e_{l},e_{k})\Big|\leq K_{G}\Vert
A\Vert\Vert(\lambda_{jk})\Vert_{\ell_{\infty}(\ell_{2})}\Vert
(\beta_{jl})\Vert_{\ell_{\infty}(\ell_{2})}.
\]
Using the duality $(\ell_{1}(\ell_{2}))^{\ast}=\ell_{\infty}(\ell_{2})$ and the inequality
\[
\Big|\sum_{j,k}\big(\sum_{l}\beta_{jl}\varepsilon_{j}A(e_{l},e_{k})\big)\lambda_{jk}\Big|\leq
K_{G}\Vert A\Vert\Vert(\lambda_{jk})\Vert_{\ell_{\infty}(\ell_{2})}\Vert
(\beta_{jl})\Vert_{\ell_{\infty}(\ell_{2})}%
\]
one obtains that
\[
\Big(\sum_{l}\beta_{jl}\varepsilon_{j}A(e_{l},e_{k})\Big)_{jk}=\Big(  \big(\sum_{l}%
\beta_{jl}\varepsilon_{j}A(e_{l},e_{k})\big)_{j=1}^{\infty}\Big)  _{k=1}%
^{\infty}\in\ell_{1}(\ell_{2}),
\]
and also
\[
\sum_{k}\Big(\sum_{j}\Big|\big(\sum_{l}\beta_{jl}\varepsilon_{j}A(e_{l},e_{k}%
)\big)\Big|^{2}\Big)^{1/2}\leq K_{G}\Vert A\Vert\Vert(\beta_{jl})\Vert_{\ell_{\infty}%
(\ell_{2})}.
\]
Hence
\begin{equation}
\sum_{k}\Big(  \sum_{j}\Big\vert \sum_{l}\beta_{jl}A(e_{l},e_{k})\Big\vert ^{2}\Big) ^{1/2}\leq
K_{G}\Vert A\Vert\Vert(\beta_{jl})\Vert
_{\ell_{\infty}(\ell_{2})}.\label{lp}%
\end{equation}
But from \cite[Corollary 5.4.2 with $p=1$ and $q=2$]{Garling} we know that%
\begin{equation}
\Big(  \sum_{k}\big(  \sum_{j}\big\vert \sum_{l}\beta_{jl}A(e_{l}%
,e_{k})\big\vert \big)  ^{2}\Big)  ^{1/2}\leq\sum_{k}\Big( \sum_{j}\big\vert
\sum_{l}\beta_{jl}A(e_{l},e_{k})\big\vert ^{2}\Big)
^{1/2}.\label{lp2}%
\end{equation}
It follows that
\begin{align*}
\Vert(\beta\circ a)_{jk}\Vert_{\ell_{1}(\ell_{2})} &  \leq K_{G}\Vert
A\Vert\Vert(\beta_{jk})\Vert_{\ell_{\infty}(\ell_{2})}~{\rm and}\\
\Vert(\beta\circ a)_{jk}\Vert_{\ell_{2}(\ell_{1})} &  \leq K_{G}\Vert
A\Vert\Vert(\beta_{jk})\Vert_{\ell_{\infty}(\ell_{2}).}%
\end{align*}
Now complex interpolation gives that
\[
(\ell_{1}(\ell_{2}),\ell_{2}(\ell_{1}))_{[\theta]}=\ell_{p}((\ell_{2},\ell
_{1})_{[\theta]})
\]
with%
\[
\frac{1}{p}=(1-\theta)+\frac{\theta}{2}.
\]
One concludes that
\[
(\ell_{1}(\ell_{2}),\ell_{2}(\ell_{1}))_{[\theta]}=\ell_{p}(\ell_{q})
\]
with
\[
\frac{1}{p}=(1-\theta)+\frac{\theta}{2}\text{ and }\frac{1}{q}=\frac{1-\theta
}{2}+\theta\text{ (hence }\frac{1}{q}=\frac{1}{2}+\frac{1}{p^{\prime}%
}\text{).}%
\]
So $(\beta\circ a)_{jk}\in\ell_{p}(\ell_{q})$ and
\[
\Vert(\beta\circ a)_{jk}\Vert_{\ell_{p}(\ell_{q})}\leq K_{G}\Vert A\Vert
\Vert(\beta_{jk})\Vert_{\ell_{\infty}(\ell_{2})},
\]
that is
\[
\Big(\sum_{k}\big(\sum_{j}\big|\sum_{l}\beta_{jl} A(e_{l},e_{k})\big|^{q}\big)^{p/q}\Big)^{1/p}\leq
K_{G}\Vert A\Vert\sup_{k}\Big(\sum_{j}|\beta_{jk}|^{2}\Big)^{1/2}.
\]
Finally note that $\theta=1/2$ gives $p=4/3$ and $q=4/3$. \end{proof}

\begin{remark}\rm
\textrm{As pointed out before, although our result holds in a more
general setting, the estimate $K_G$ for the best Littlewood constant
we have just obtained in the complex case improves the estimate
$2^{1/4}K_{G}^{1/2} $given in \cite[Theorem 34.11]{DF} and
\cite[Theorem 11.11]{TJ} and equals the best known estimate. }
\end{remark}

\begin{remark}\rm
\textrm{Making $p=1$ and $q=2$ we recover the so-called {\it general
Littlewood inequality} that appears in \cite[(2.10), p. 280]{LP}. }
\end{remark}

\section{Some general coincidence results}

\label{coincidence}

Defant-Voigt Theorem (see \cite[Theorem 3.10]{Alencar}) stating that
\begin{equation}
\label{DV}{\mathcal{L}}(E_{1},\ldots, E_{n};\mathbb{K})=\Pi_{(1;1,\ldots,1)}(E_{1}%
,\ldots, E_{n};\mathbb{K})
\end{equation}
is probably the first and most folkloric coincidence result in the theory of
absolutely summing multilinear mappings. The next result gives a slightly more
general version.

\begin{proposition}
\label{primerlema} Let $n\geq2$ and $A\in{\mathcal{L}}(E_{1},\ldots
,E_{n};\mathbb{K})$. Then
\[
\hat{A}\colon Rad(E_{1})\times\cdots\times Rad(E_{n})\rightarrow\ell_{1}%
\]
is bounded. Moreover $\Vert\hat{A}\Vert=\Vert A\Vert.$
\end{proposition}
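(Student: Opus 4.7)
The inequality $\|\hat{A}\| \geq \|A\|$ is immediate: each $x^i \in E_i$ yields a sequence $(x^i, 0, 0, \ldots) \in Rad(E_i)$ whose norm equals $\|x^i\|$, and plugging these into $\hat{A}$ recovers $|A(x^1, \ldots, x^n)|$. The substantial content is the reverse inequality, which I plan to obtain from a single Rademacher orthogonality identity together with Fubini and iterated Cauchy--Schwarz.

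For finite families $(x^i_j)_{j=1}^m \subset E_i$ I select unimodular scalars $\varepsilon_j \in \mathbb{K}$ so that $\varepsilon_j A(x^1_j, \ldots, x^n_j) = |A(x^1_j, \ldots, x^n_j)|$. The key identity uses $n-1$ independent time variables $t_1, \ldots, t_{n-1} \in [0,1]$:
\[
\sum_j \varepsilon_j A(x^1_j, \ldots, x^n_j) = \int_{[0,1]^{n-1}} A\!\left(\sum_j r_j(t_1) x^1_j, \ldots, \sum_j r_j(t_{n-1}) x^{n-1}_j, \sum_j \varepsilon_j \prod_{k=1}^{n-1} r_j(t_k)\, x^n_j\right) dt.
\]
This is verified by expanding $A$ via multilinearity and applying $\int_0^1 r_i(t) r_k(t)\,dt = \delta_{ik}$ once in each variable $t_k$, which forces the indices from the first $n-1$ slots to equal the index from the last slot, isolating the diagonal. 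Bounding the integrand by $\|A\|$ times the product of norms and integrating over $t_1$ via Cauchy--Schwarz, one uses that the factor $\varepsilon_j \prod_{k=2}^{n-1} r_j(t_k)$ is unimodular and (in the real case) Rademacher sign-invariance gives $\|\sum_j \varepsilon_j \prod_{k=1}^{n-1} r_j(t_k)\, x^n_j\|_{L^2(dt_1)} = \|(x^n_j)\|_{Rad(E_n)}$, independently of $t_2, \ldots, t_{n-1}$. The remaining variables decouple and the estimates $\int h_i(t_i)\,dt_i \leq \|h_i\|_{L^2(dt_i)} = \|(x^i_j)\|_{Rad(E_i)}$ (for $h_i(t_i) := \|\sum_j r_j(t_i) x^i_j\|$) yield $\sum_j |A(x^1_j, \ldots, x^n_j)| \leq \|A\| \prod_i \|(x^i_j)\|_{Rad(E_i)}$.

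The main obstacle is identifying the correct Rademacher identity: a symmetric version with $n$ independent variables and a Walsh-like kernel $W(t_1, \ldots, t_n) = \sum_k \varepsilon_k r_k(t_1) \cdots r_k(t_n)$ satisfies $\|W\|_{L^2([0,1]^n)} = \sqrt{m}$, which spoils any uniform estimate under Cauchy--Schwarz. The asymmetric choice above -- only $n-1$ time variables, with the product $\prod_{k=1}^{n-1} r_j(t_k)$ lumped into the $n$-th slot -- preserves the diagonal via orthogonality while keeping each factor controllable by exactly one $Rad$-norm, which is what the iterated Cauchy--Schwarz chain requires. A secondary technicality arises when $\mathbb{K} = \mathbb{C}$, where the unimodular factors $\varepsilon_j \prod_k r_j(t_k)$ are genuinely complex and pointwise sign-invariance must be replaced by the contraction principle; to recover the sharp constant $\|A\|$ I would split $\varepsilon_j$ into real and imaginary parts and reduce to the real case, or invoke a Kahane-type duality argument.
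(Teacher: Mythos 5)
Your proposal is correct and follows essentially the same route as the paper's proof: the same asymmetric identity with $n-1$ Rademacher variables (the paper uses $f_\alpha(t_1)=\sum_j\alpha_j r_j(t_1)x_j^1$, $f_i(t_i)=\sum_j r_j(t_i)x_j^i$ and $f_n(t_1,\ldots,t_{n-1})=\sum_j r_j(t_1)\cdots r_j(t_{n-1})x_j^n$), orthogonality to isolate the diagonal, Cauchy--Schwarz in $t_1$ pairing the first and last slots, and $L^1\le L^2$ estimates in the remaining variables. The only differences are cosmetic: the paper places the unimodular scalars in the first slot rather than lumping them into the Walsh factor of the last one, and it disposes of them by citing the contraction principle --- exactly the point where you (reasonably) flag the complex-case constant.
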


\begin{proof} Let $(x_{j}^{i})$ be finite sequences in $E_{i}$ for $i=1,\ldots,n.$
One can find a sequence $(\alpha_{j})$ of norm one scalars so that%
\[
\sum_{j}\left\vert A(x_{j}^{1},\ldots,x_{j}^{n})\right\vert =\sum_{j}%
A(\alpha_{j}x_{j}^{1},\ldots,x_{j}^{n}).
\]
Let
\begin{align*}
f_{\alpha}(t_{1})  &  =\sum_{j}\alpha_{j}r_{j}(t_{1})x_{j}^{i},\\
f_{i}(t_{i})  &  =\sum_{j}r_{j}(t_{i})x_{j}^{i},i=2,\ldots,n-1,\text{ and}\\
f_{n}(t_{1},\ldots,t_{n-1})  &  =\sum_{j}r_{j}(t_{1})\cdots r_{j}%
(t_{n-1})x_{j}^{n}%
\end{align*}
for $t_{1},\ldots,t_{n-1}\in\lbrack0,1]$. Using the orthogonality of the
Rademacher system and the Contraction Principle (see \cite[page 231]%
{Diestel}) we have%
\begin{align*}
&  \sum_{j}A(\alpha_{j}x_{j}^{1},\ldots,x_{j}^{n})\\
&  =\int_{0}^{1}\cdots\int_{0}^{1}A(f_{\alpha}(t_{1}),\ldots,f_{n-1}%
(t_{n-1}),f_{n}(t_{1},\ldots,t_{n-1}))dt_{1}\cdots dt_{n-1}\\
&  \leq\Vert A\Vert\int_{0}^{1}\cdots\int_{0}^{1}\cdots\left(  \int_{0}%
^{1}\Vert f_{\alpha}(t_{1})\Vert\Vert f_{n}(t_{1},\ldots,t_{n-1})\Vert
dt_{1}\right)  \Vert f_{2}(t_{2})\Vert\cdots\Vert f_{n-1}(t_{n-1})\Vert
dt_{2}\cdots dt_{n-1}\\
&  \leq\Vert A\Vert\int_{0}^{1}\cdots\int_{0}^{1}\Vert(x_{j}^{1})_{j}%
\Vert_{Rad_{2}}\Vert(x_{j}^{n})_{j}\Vert_{Rad_{2}}\Vert f_{2}(t_{2}%
)\Vert\cdots\Vert f_{n-1}(t_{n-1})\Vert dt_{2}\cdots dt_{n-1}\\
&  =\Vert A\Vert\Vert(x_{j}^{1})_{j}\Vert_{Rad_2}\Vert(x_{j}^{2}%
)_{j}\Vert_{Rad_1}\ldots\Vert(x_{j}^{n-1})_{j}\Vert_{Rad_1}\Vert(x_{j}^{n})_{j}\Vert_{Rad_2}\\
&  \leq\Vert A\Vert\Vert(x_{j}^{1})_{j}\Vert_{Rad}\cdots\Vert(x_{j}^{n}%
)_{j}\Vert_{Rad}.
\end{align*}
It is easy to see that $\Vert\hat{A}\Vert\geq\Vert A\Vert$ and we conclude the proof. \end{proof}

The following result, which appears in \cite[Proposition 3.3]{Thesis}, will be
used several times in this paper (we include a short proof for the sake of completeness):

\begin{proposition}
[Inclusion Theorem]Let $0< q\leq p\leq\infty$, $0< q_{j}\leq
p_{j}\leq\infty$ for all $j=1,\ldots,n$. If
$\frac{1}{q_{1}}+\cdots+\frac
{1}{q_{n}}-\frac{1}{q}\leq\frac{1}{p_{1}}+
\cdots+\frac{1}{p_{n}}-\frac{1}{p}$ then
\[
\Pi_{(q;q_{1},\ldots,q_{n})}(E_{1},\ldots,E_{n};F)\subset\Pi_{(p;p_{1}%
,\ldots,p_{n})}(E_{1},\ldots,E_{n};F)
\]
and $\pi_{(p;p_{1},\ldots,p_{n})}\leq\pi_{(q;q_{1} ,\ldots,q_{n})}$. \medskip
\end{proposition}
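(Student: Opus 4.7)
The plan is to adapt the classical inclusion theorem for absolutely summing linear operators (Pietsch--Mitiagin), in which one composes with a diagonal multiplication by an auxiliary scalar sequence in a suitable $\ell_{\alpha}$. For each coordinate $i=1,\ldots,n$ I introduce the \emph{defect exponent} $\alpha_{i}\in(0,\infty]$ defined by $\frac{1}{\alpha_{i}}=\frac{1}{q_{i}}-\frac{1}{p_{i}}\geq 0$. A direct application of H\"older's inequality to $|a_{j}^{i}\,\varphi(x_{j}^{i})|^{q_{i}}$ with conjugate exponents $\alpha_{i}/q_{i}$ and $p_{i}/q_{i}$ (both $\geq 1$, since $\frac{1}{\alpha_{i}}+\frac{1}{p_{i}}=\frac{1}{q_{i}}$), followed by a supremum over $\varphi\in B_{E_{i}'}$, shows that whenever $\|(a_{j}^{i})_{j}\|_{\ell_{\alpha_{i}}}\leq 1$ and $(x_{j}^{i})_{j}\in\ell_{p_{i}}^{w}(E_{i})$, the twisted sequence $(a_{j}^{i}x_{j}^{i})_{j}$ lies in $\ell_{q_{i}}^{w}(E_{i})$ with norm at most $\|(x_{j}^{i})_{j}\|_{\ell_{p_{i}}^{w}(E_{i})}$.

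Feeding these twisted sequences into the hypothesis $A\in\Pi_{(q;q_{1},\ldots,q_{n})}$ gives
\[
\sum_{j}|a_{j}^{1}\cdots a_{j}^{n}|^{q}\,\|A(x_{j}^{1},\ldots,x_{j}^{n})\|^{q}\;\leq\;\pi_{(q;q_{1},\ldots,q_{n})}(A)^{q}\prod_{i=1}^{n}\|(x_{j}^{i})_{j}\|_{\ell_{p_{i}}^{w}(E_{i})}^{q}.
\]
Setting $b_{j}:=a_{j}^{1}\cdots a_{j}^{n}$ and $\frac{1}{\beta}:=\sum_{i=1}^{n}\frac{1}{\alpha_{i}}$, I next claim that as $(a^{i})$ ranges over $\prod_{i}B_{\ell_{\alpha_{i}}}$, the nonnegative sequence $(|b_{j}|)_{j}$ sweeps out the whole positive cone of $B_{\ell_{\beta}}$. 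The inclusion ``$\subseteq$'' is scalar H\"older; for the reverse, given $b\geq 0$ with $\|b\|_{\ell_{\beta}}\leq 1$, the explicit factorization $a_{j}^{i}:=b_{j}^{\beta/\alpha_{i}}$ satisfies $\sum_{j}(a_{j}^{i})^{\alpha_{i}}=\sum_{j}b_{j}^{\beta}\leq 1$ and $\prod_{i}a_{j}^{i}=b_{j}^{\beta\sum_{i}1/\alpha_{i}}=b_{j}$.

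Taking the supremum of the left-hand side over such $b$ (via the change of variable $t_{j}=b_{j}^{q}$ and the standard $\ell_{\beta/q}$--$\ell_{(\beta/q)'}$ duality) produces exactly the $\ell_{(\beta/q)'}$-norm of $(\|A(x_{j}^{1},\ldots,x_{j}^{n})\|^{q})_{j}$. The index hypothesis $\sum_{i}\frac{1}{q_{i}}-\frac{1}{q}\leq\sum_{i}\frac{1}{p_{i}}-\frac{1}{p}$ rewrites as $\frac{1}{\beta}\leq\frac{1}{q}-\frac{1}{p}$, which is equivalent to $(\beta/q)'\leq p/q$. Since smaller indices yield larger $\ell_{r}$-norms on sequences, this dominates the target quantity
\[
\big\|(\|A(x_{j}^{1},\ldots,x_{j}^{n})\|^{q})_{j}\big\|_{\ell_{p/q}}=\Big(\sum_{j}\|A(x_{j}^{1},\ldots,x_{j}^{n})\|^{p}\Big)^{q/p},
\]
and extracting $q$-th roots yields $\pi_{(p;p_{1},\ldots,p_{n})}(A)\leq\pi_{(q;q_{1},\ldots,q_{n})}(A)$.

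The only conceptually delicate step is the surjectivity of the product map $\prod_{i}B_{\ell_{\alpha_{i}}}\to B_{\ell_{\beta}}^{+}$, which is handled by the explicit factorization above. The remaining obstacles are really bookkeeping: the boundary cases $\alpha_{i}=\infty$ (when $q_{i}=p_{i}$), $\beta=\infty$, $p=q$, or $p$ or some $p_{i}=\infty$ require reinterpreting the corresponding $\ell_{r}$-norms as suprema, but nothing essential changes. The statement even allows $q,q_{i}<1$; however, in every H\"older step of the form $\frac{1}{r}+\frac{1}{s}=\frac{1}{t}$ one automatically has $r,s\geq t$, so the scalar H\"older inequality remains valid throughout.
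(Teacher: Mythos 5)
Your proof is correct and is essentially the paper's own argument: both hinge on the same H\"older defect exponents (your $\alpha_i$ are the paper's $r_j$, your $\beta$ its $r$ after reduction to the equality case), the multiplication $\ell_{\alpha_i}\cdot\ell_{p_i}^{w}(E_i)\subset\ell_{q_i}^{w}(E_i)$, and a factorization of scalar multipliers across the coordinates. The only difference is presentational: the paper normalizes to the equality case and invokes the multiplier characterization of $\ell_p(F)$ via a single sequence in $\ell_r=\ell_{r_1}\cdots\ell_{r_n}$, whereas you run the converse-H\"older/duality step explicitly as a supremum over the product of unit balls and absorb the index inequality through norm monotonicity at the end.
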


\begin{proof} By the monotonicity of the $\ell_p$-norms we may assume $\frac{1}{q_{1}}+\cdots+\frac{1}{q_{n}}-
\frac{1}{q}=\frac{1}{p_{1}}+ \cdots+\frac{1}{p_{n}}-\frac{1}{p}$. Let $A \in
\Pi_{(q;q_{1},\ldots,q_{n})}(E_{1},\ldots,E_{n};F)$ and $(x_j^k)_{j=1}^\infty \in \ell_{p_k}^w(E_k)$, $k = 1,
\ldots, n$, be given. We should prove that $(A(x_j^1, \ldots, x_j^n))_{j=1}^\infty \in \ell_p(F)$, and for
that it suffices to show that $(\alpha_j \cdot A(x_j^1, \ldots, x_j^n))_{j=1}^\infty \in \ell_q(F)$ for every
$(\alpha_j)_{j=1}^\infty \in \ell_r$ where $\frac{1}{p} + \frac{1}{r} = \frac{1}{q}$. Defining $r_1, \ldots,
r_n$ by $\frac{1}{p_j} + \frac{1}{r_j} = \frac{1}{q_j}$, $j = 1, \ldots, n$, it follows that $\frac{1}{r} =
\frac{1}{r_1} + \cdots +\frac{1}{r_n}$.
So $\ell_r = \ell_{r_1} \cdots \ell_{r_n}$. Given
$(\alpha_j)_{j=1}^\infty \in \ell_r$,  we write
$(\alpha_j)_{j=1}^\infty = (\alpha_j^1 \cdots
\alpha_j^n)_{j=1}^\infty$ where $(\alpha_j^k)_{j=1}^\infty \in
\ell_{r_k}, k= 1, \ldots, n.$ Since $(\alpha_j^k)_{j=1}^\infty \in
\ell_{r_k}$ and $(x_j^k)_{j=1}^\infty \in \ell_{p_k}^w(E_k)$ it
follows that $(\alpha_j^kx_j^k)_{j=1}^\infty \in \ell_{q_k}^w(E_k)$,
$k =1, \ldots, n$. Therefore
$$(\alpha_j \cdot A(x_j^1, \ldots, x_j^n))_{j=1}^\infty = (\alpha_j^1\cdots \alpha_j^n
\cdot A(x_j^1, \ldots, x_j^n))_{j=1}^\infty = (A(\alpha_j^1x_j^1, \ldots, \alpha_j^nx_j^n))_{j=1}^\infty \in \ell_q(F)$$
because $A$ is $(q;q_1, \ldots, q_n)$-summing. The identifications and embeddings we used are all isometric,
so the inequality between the norms follows.
\end{proof}

Using Proposition \ref{primerlema} and the inclusion $\ell_{1}^{w}(E)\subset
Rad(E)$ one obtains Defant-Voigt's result. Now combining (\ref{DV}) with the
inclusion theorem it is easy to prove that for $n\geq2$ and $\frac{1}{p_{1}%
}+\cdots+\frac{1}{p_{n}}\ge\frac1p$ one has
\begin{equation}
\label{DV2}{\mathcal{L}}(E_{1},\ldots,E_{n};\mathbb{K})=\Pi_{(p;p_{1},\ldots,p_{n}%
)}(E_{1},\ldots,E_{n};\mathbb{K}) {\rm ~whenever~}\frac{1}{p_{1}}%
+\cdots+\frac{1}{p_{n}}-\frac1p\ge n-1.
\end{equation}

Before start exploring the inclusion theorem we show that sometimes
the inclusion relationship turns out to be an equality. The next
result is simple (it appeared in essence in \cite[Theorem 16]{MT})
but indicates a good direction to be followed.

\begin{proposition}
Let $E_{1}, \ldots, E_{n}$ be cotype 2 spaces. Then
\[
\Pi_{(\frac{1}{n};1,\ldots,1)}(E_{1},\ldots,E_{n};F) = \Pi_{(\frac{2}%
{n};2,\ldots,2)}(E_{1},\ldots,E_{n};F)
\]
for every Banach space $F$.
\end{proposition}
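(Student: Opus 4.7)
The containment $\Pi_{(1/n;1,\ldots,1)} \subset \Pi_{(2/n;2,\ldots,2)}$ is immediate from the Inclusion Theorem just proved: with $(q;q_1,\ldots,q_n)=(1/n;1,\ldots,1)$ and $(p;p_1,\ldots,p_n)=(2/n;2,\ldots,2)$, the two defect quantities $\sum_i 1/q_i-1/q = n-n$ and $\sum_i 1/p_i-1/p = n/2-n/2$ both equal $0$, so the theorem's hypothesis is satisfied trivially (as an equality). No cotype assumption is needed for this direction.

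For the reverse containment my plan is to exploit the Pietsch-type factorization~(\ref{fact}) for $p$-dominated multilinear mappings. Given $A\in\Pi_{(2/n;2,\ldots,2)}(E_1,\ldots,E_n;F)$, i.e.\ a $2$-dominated map, I would write $A=B\circ(u_1,\ldots,u_n)$ with $u_i\in\Pi_2(E_i;G_i)$ and $B\in\mathcal{L}(G_1,\ldots,G_n;F)$. The goal is then to upgrade each linear map $u_i$ to a $1$-summing operator; once this is done, the same factorization~(\ref{fact}) identifies $A$ as a $1$-dominated map, that is, $A\in\Pi_{(1/n;1,\ldots,1)}(E_1,\ldots,E_n;F)$.

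This upgrade is where the cotype~2 hypothesis on each $E_i$ enters. I would invoke the classical linear coincidence (in the spirit of Maurey's theorem) that for a cotype~2 source $E$ the equality $\Pi_2(E;G)=\Pi_1(E;G)$ holds for every Banach space $G$. Applied individually to each $u_i$ this produces $u_i\in\Pi_1(E_i;G_i)$, and the factorization~(\ref{fact}) then closes the argument.

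The main technical obstacle is precisely this linear coincidence for cotype~2 source spaces. The standard route is to factor $u_i$ through a Hilbert space $L_2(\mu_i)$ using Pietsch's factorization, and then to use cotype~2 of $E_i$ (equivalently, its Orlicz property on weakly $\ell_1$-summable sequences) to strengthen the first leg $E_i\to L_2(\mu_i)$ of the factorization from its automatic $(2;2)$-summability to the $(1;1)$-summability required. This upgrade is the essence of \cite[Theorem 16]{MT}, to which the authors of the present paper refer for the key ingredient.
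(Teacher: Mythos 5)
Your proposal is correct and follows essentially the same route as the paper: both directions reduce to the factorization (\ref{fact}) of dominated multilinear mappings combined with the classical linear coincidence $\Pi_{1}(E_{j};G_{j})=\Pi_{2}(E_{j};G_{j})$ for cotype~2 domains, which the paper cites to \cite[Corollary 11.16(a)]{Diestel} (Maurey's theorem); the reference to \cite{MT} is only for the proposition itself, not for that linear ingredient, but this is a matter of attribution, not a gap.
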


\begin{proof}  It follows by combining (\ref{fact}) and the result saying that  if $E_j$ has cotype $2$ then
$\Pi_1(E_j;G_j)=\Pi_2(E_j;G_j)$ (see \cite[Corollary
11.16(a)]{Diestel}).
\end{proof}

We aim to prove a more general result for cotype 2 spaces:

\begin{theorem}
\label{cotipo} Let $1\le k\le n$ and assume that $E_{1}, \ldots, E_{k}$ have
cotype 2. If $p\le q$ and $1\le q_{i}\le2$, $i=1,\ldots,k$, satisfy that
$\sum_{i=1}^{k}\frac{1}{q_{i}}-\frac{1}{q} = k-\frac{1}{p}$ then
\[
\Pi_{(p;1,\ldots,1, p_{k+1},...,p_{n})}(E_{1},\ldots,E_{n};F) = \Pi
_{(q;q_{1},\ldots,q_{k}, p_{k+1},....,p_{n})}(E_{1},\ldots,E_{n};F)
\]
for every Banach space $F$.
\end{theorem}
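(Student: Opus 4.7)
The plan is to prove the equality by establishing the two inclusions separately. The inclusion $\Pi_{(p;1,\ldots,1,p_{k+1},\ldots,p_n)}(E_1,\ldots,E_n;F) \subset \Pi_{(q;q_1,\ldots,q_k,p_{k+1},\ldots,p_n)}(E_1,\ldots,E_n;F)$ follows directly from the Inclusion Theorem: $p \le q$ and $1 \le q_i$ supply the monotonicity, while the equation $\sum_{i=1}^k \frac{1}{q_i} - \frac{1}{q} = k - \frac{1}{p}$ rearranges to $\bigl(k + \sum_{j>k} \frac{1}{p_j}\bigr) - \frac{1}{p} = \bigl(\sum_{i=1}^k \frac{1}{q_i} + \sum_{j>k} \frac{1}{p_j}\bigr) - \frac{1}{q}$, which is exactly the parameter-gap condition (holding with equality). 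The cotype $2$ hypothesis plays no role in this direction.

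The content of the theorem is the reverse inclusion, which I would prove by induction on $k$. The inductive backbone is a \emph{one-slot upgrade} lemma: if $E_i$ has cotype $2$ and $1 \le q_i \le 2$, then $A \in \Pi_{(q;s_1,\ldots,s_{i-1},q_i,s_{i+1},\ldots,s_n)}(E_1,\ldots,E_n;F)$ implies $A \in \Pi_{(r;s_1,\ldots,s_{i-1},1,s_{i+1},\ldots,s_n)}(E_1,\ldots,E_n;F)$, where $\frac{1}{r} = \frac{1}{q} + 1 - \frac{1}{q_i}$. Granted this lemma, Theorem \ref{cotipo} follows by successively upgrading slots $i = 1, 2, \ldots, k$. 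After $k$ steps the first $k$ parameters have all been reduced to $1$ and the summing exponent has decreased from $q$ to $r^{(k)}$ with $\frac{1}{r^{(k)}} = \frac{1}{q} + \sum_{i=1}^k \bigl(1 - \frac{1}{q_i}\bigr)$, which by hypothesis equals $\frac{1}{p}$.

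For the one-slot upgrade the idea is to linearize $A$ in the slot to be improved and invoke Maurey's theorem (cotype $2$ yields $\Pi_1(E_i;G) = \Pi_{q_i}(E_i;G)$ for $1 \le q_i \le 2$; see \cite[Corollary 11.16(a)]{Diestel} combined with the Inclusion Theorem for the extrapolation). Concretely, for fixed finite families $(x_j^\nu)_{j=1}^m$ in $E_\nu$ ($\nu \ne i$) of unit $\ell_{s_\nu}^w$-norm, one studies the linear operator $U \colon E_i \to \ell_q(F)$ defined by $U(x) = \bigl(A(x_j^1,\ldots,x_j^{i-1},x,x_j^{i+1},\ldots,x_j^n)\bigr)_{j=1}^m$. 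The multilinear summing hypothesis of $A$, paired via duality against sequences of functionals in $F'$, translates into a $(q;q_i)$-summing estimate for $U$ with norm controlled by $\pi_{(q;s_1,\ldots,q_i,\ldots,s_n)}(A)$. Maurey's theorem then converts this into the $(r;1)$-summing estimate for $U$, and taking the supremum of the resulting inequalities over all admissible choices of frozen families yields the desired multilinear $(r;s_1,\ldots,1,\ldots,s_n)$-summing conclusion for $A$.

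The main obstacle is making the linearization rigorous: the multilinear summing norm couples all slots through matched indices, so the $(q;q_i)$-summing estimate for the partial operator $U$ is not literally a corollary of $\pi_{(q;s_1,\ldots,q_i,\ldots,s_n)}(A)$. Handling this requires either a Pietsch-type domination specialized to one slot or a careful duality argument that preserves the index matching. Tracking constants through the $k$ iterations of the upgrade and confirming that the identity $\sum_{i=1}^k \frac{1}{q_i} - \frac{1}{q} = k - \frac{1}{p}$ makes the induction close at the exponent $p$ is the principal technical task.
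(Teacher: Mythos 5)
Your reduction is arithmetically sound: the easy inclusion via the Inclusion Theorem is exactly what the paper does, and iterating a one-slot upgrade with $\frac1r=\frac1q+1-\frac1{q_i}$ does land on $\frac1p$ after $k$ steps. The genuine gap is the one you flag and then leave unresolved: your proof of the one-slot lemma needs the frozen-slot operator $U(x)=\bigl(A(x_j^1,\ldots,x_j^{i-1},x,x_j^{i+1},\ldots,x_j^n)\bigr)_j$ to be $(q;q_i)$-summing with norm controlled by $\pi_{(q;s_1,\ldots,q_i,\ldots,s_n)}(A)$, and this estimate involves the double sum $\sum_l\sum_j\|A(x_j^1,\ldots,z_l,\ldots,x_j^n)\|^q$ over \emph{unmatched} indices $(l,j)$, whereas the absolutely summing hypothesis on $A$ only controls matched-index sums. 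Re-indexing over pairs $(j,l)$ forces you to repeat the frozen families, which inflates their weak norms by a factor growing with the length of the families, so no uniform bound follows; this is precisely the gap between absolutely summing and multiple summing multilinear mappings, and neither the ``Pietsch-type domination specialized to one slot'' nor the ``duality argument preserving index matching'' is supplied. (A further, smaller issue: passing from $(q;q_i)$-summability of $U$ to $(r;1)$-summability is not Maurey's theorem $\Pi_1=\Pi_2$ itself but an extrapolation needing its own justification.) As written, the key lemma is unproven.

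For comparison, the paper avoids linearization entirely. It first uses the Inclusion Theorem to pass from $\Pi_{(q;q_1,\ldots,q_k,p_{k+1},\ldots,p_n)}$ into $\Pi_{(q_0;2,\ldots,2,p_{k+1},\ldots,p_n)}$, where $\frac k2+\frac1{q_0}=\frac1p$ (the identity $\sum_{i\le k}\frac1{q_i}-\frac1q=k-\frac1p$ makes this admissible), and then exploits the cotype~2 hypothesis only through the sequence factorization $\ell_1^w(E_i)=\ell_2\cdot\ell_2^w(E_i)$ of \cite[Proposition 6(a)]{AB1}: writing $x_j^i=\alpha_j^iy_j^i$ with $(\alpha_j^i)_j\in\ell_2$ and $(y_j^i)_j\in\ell_2^w(E_i)$, one has $(\alpha_j^1\cdots\alpha_j^k)_j\in\ell_{2/k}$ and $(A(y_j^1,\ldots,y_j^k,x_j^{k+1},\ldots,x_j^n))_j\in\ell_{q_0}(F)$, and H\"older's inequality gives membership in $\ell_p(F)$. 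If you want to keep your slot-by-slot induction, the same device repairs it: for $1\le q_i\le2$ one has $\ell_1^w(E_i)=\ell_{q_i^{\prime}}\cdot\ell_{q_i}^w(E_i)$ with $\frac1{q_i}+\frac1{q_i^{\prime}}=1$ (split the scalar factor as $\ell_2=\ell_{q_i^{\prime}}\cdot\ell_s$ with $\frac1s=\frac1{q_i}-\frac12$ and absorb the $\ell_s$ part into the weak-$2$ sequence), after which the one-slot upgrade is immediate from H\"older, with no partial operator and no Maurey step.
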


\begin{proof} The inclusion
$$\Pi_{(p;1,\ldots,1, p_{k+1},...,p_n)}(E_{1},\ldots,E_{n};F) \subset \Pi_{(q;q_1,\ldots,q_k, p_{k+1},....,p_n)}(E_{1},\ldots,E_{n};F)$$
follows from the Inclusion Theorem.
Assume first that $q_i=2$ for $i=1,...,k$ and $A \in
\Pi_{(q_0;2,\ldots,2, p_{k+1},\ldots,p_n)}(E_{1},\ldots,E_{n};F)$
where $\frac{k}{2} + \frac{1}{q_0} = \frac{1}{p}$. Let
$(x_j^i)_{j=1}^\infty \in \ell_{1}^w(E_i)$ for $i = 1, \ldots, k$
and $(x_j^i)_{j=1}^\infty \in \ell_{p_i}^w(E_i)$ for $i = k+1,
\ldots, n$.  Since $E_i$ has cotype 2, by \cite[Proposition
6(a)]{AB1} we know that $\ell_1^w(E_i) = \ell_2 \cdot
\ell_2^w(E_i)$, $i = 1, \ldots, k$. Hence there are
$(\alpha_j^i)_{j=1}^\infty \in \ell_2$ and $(y_j^i)_{j=1}^\infty
\in \ell_2^w(E_k)$ such that $(x_j^i)_{j=1}^\infty = (\alpha_j^k
y_j^i)_{j=1}^\infty$, $i = 1, \ldots, k$. In this fashion,
$(\alpha_j^1 \cdots \alpha_j^k)_{j=1}^\infty \in \ell_2 \cdots
\ell_2 = \ell_{\frac{2}{k}}$ and $(A(y_j^1, \ldots,
y_j^k,x_j^{k+1},...,x_j^n))_{j=1}^\infty \in \ell_{q_0}(F)$. Since
$\frac{k}{2} + \frac{1}{q_0} = \frac{1}{p}$ it follows that
$$ (A(x_j^1, \ldots, x_j^n))_{j=1}^\infty =(\alpha_j^1 \cdots \alpha_j^k A(y_j^1, \ldots, y_j^k, x^{k+1}_j,\ldots,x^n_j))_{j=1}^\infty
\in \ell_p(F).$$
Now the general case follows again from the inclusion theorem, because
the assumption gives  that $\sum_{i=1}^n\frac{1}{q_i}
-\frac{1}{q}= \frac{k}{2}- \frac{1}{q_0}$ and then
$$
\Pi_{(q;q_1,\ldots,q_k,
p_{k+1},\ldots,p_n)}(E_{1},\ldots,E_{n};F)\subset
\Pi_{(q_0;2,\ldots,2, p_{k+1},\ldots,p_n)}(E_{1},\ldots,E_{n};F).$$
\end{proof}

\begin{corollary}
Let $1\le k\le n$. Assume that ${\mathcal{L}}(E_{1},\ldots,E_{k};F)
=\Pi_{(p;q_{1},\ldots,q_{k})}(E_{1},\ldots,E_{k};F)$ and that
$E_{k+1}, \ldots, E_{n} $ have cotype 2. If $p\le q$ and $1\le
q_{i}\le2$, $i=k+1,\ldots,n$, satisfy that
$\sum_{i=k+1}^{n}\frac{1}{q_{i}}-\frac{1}{q} = n-k-\frac{1}{p}$ then
\[
{\mathcal{L}}(E_{1},\ldots,E_{n};F)=\Pi_{(q;q_{1},\ldots,q_{n})}(E_{1}%
,\ldots,E_{n};F).
\]

\end{corollary}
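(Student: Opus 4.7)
The plan is to combine the first hypothesis with Theorem~\ref{cotipo} applied to the cotype~$2$ positions. Since Theorem~\ref{cotipo} is invariant under permutations of the factors, applying it with the cotype~$2$ spaces $E_{k+1},\ldots,E_n$ playing the role of the cotype~$2$ factors (and the exponents $q_{k+1},\ldots,q_n$ playing the role of the $q_i$'s) gives, under the stated conditions,
$$\Pi_{(p;q_1,\ldots,q_k,1,\ldots,1)}(E_1,\ldots,E_n;F)=\Pi_{(q;q_1,\ldots,q_n)}(E_1,\ldots,E_n;F).$$
Hence it suffices to prove the inclusion $\mathcal L(E_1,\ldots,E_n;F)\subset\Pi_{(p;q_1,\ldots,q_k,1,\ldots,1)}(E_1,\ldots,E_n;F)$.

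For this I would first extract, via the first hypothesis and the closed graph theorem, a constant $K>0$ such that $\pi_{(p;q_1,\ldots,q_k)}(B)\le K\|B\|$ for every $B\in\mathcal L(E_1,\ldots,E_k;F)$, so that each $k$-linear slice of $A$ is automatically $(p;q_1,\ldots,q_k)$-summing with a controlled norm. To pass from this fibre-wise summability to the global one with $1$'s in the last $n-k$ slots, I would adapt the Rademacher-integration trick from Proposition~\ref{primerlema}. Given finite sequences $(x_j^i)_{j=1}^m$, introduce $n-k$ parameters $t_1,\ldots,t_{n-k}\in[0,1]$ and set
$$f_i(t_{i-k})=\sum_l r_l(t_{i-k})x_l^i\quad(k+1\le i\le n-1),\qquad f_n(t_1,\ldots,t_{n-k})=\sum_l r_l(t_1)\cdots r_l(t_{n-k})x_l^n.$$
Orthogonality of the Rademacher system forces the diagonal identity
$$A(x_j^1,\ldots,x_j^n)=\int_{[0,1]^{n-k}} r_j(t_{n-k})\,A\bigl(x_j^1,\ldots,x_j^k,f_{k+1}(t_1),\ldots,f_n(t)\bigr)\,dt.$$

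Taking norms, applying Jensen's inequality to the $p$-th power, and summing in $j$ reduces the estimate on $\sum_j\|A(x_j^1,\ldots,x_j^n)\|^p$ to the integrand $\sum_j\|A(x_j^1,\ldots,x_j^k,f_{k+1}(t_1),\ldots,f_n(t))\|^p$, which by the constant $K$ is dominated, for each fixed $t$, by $K^p\|A\|^p\prod_{i\le k}\|(x_j^i)\|_{\ell_{q_i}^w(E_i)}^p\prod_{i>k}\|f_i(t)\|^p$. It remains to integrate $\prod_{i>k}\|f_i(t)\|^p$ over $[0,1]^{n-k}$: by Fubini together with the sign-invariance of the Rademacher distribution (the extra $r_l(t_1)\cdots r_l(t_{n-k-1})$ factors in $f_n$ are absorbed by an $L^p$-norm-preserving change of signs), this multi-variable integral factors as a product of one-dimensional $Rad_p(E_i)$-norms, comparable by Kahane's inequality to $Rad(E_i)$-norms and in turn controlled, through the continuous embedding $\ell_1^w(E_i)\subset Rad(E_i)$, by $\|(x_j^i)\|_{\ell_1^w(E_i)}$.

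The main obstacle is precisely the factorization of this correlated multi-variable Rademacher integral: the product form of $f_n$ is chosen so that orthogonality concentrates the integral on the diagonal index $l=j$, while Fubini-plus-sign-invariance decouples the remaining $(n-k)$ integrations; the cotype~$2$ hypothesis enters only indirectly through Theorem~\ref{cotipo} in the final upgrade from $1$'s to $q_i$'s in the last $n-k$ slots.
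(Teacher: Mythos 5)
Your argument is correct (for $p\geq 1$) but it replaces the paper's key citation with a self-contained proof, so the routes differ in their first half. The paper's proof is two lines: it quotes \cite[Corollary 3.2]{Port} to pass from the $k$-linear coincidence to ${\mathcal L}(E_{1},\ldots,E_{n};F)=\Pi_{(p;q_{1},\ldots,q_{k},1,\ldots,1)}(E_{1},\ldots,E_{n};F)$, and then applies Theorem \ref{cotipo} (implicitly in the permuted form you make explicit) to trade the $1$'s for the $q_{i}$'s; your second half is identical to this. What you do differently is to prove the quoted step directly by the Rademacher-diagonal averaging of Proposition \ref{primerlema}: your integral identity with the external factor $r_{j}(t_{n-k})$ does pin the expansion to the diagonal, the closed-graph theorem does give the uniform constant $K$, and the Fubini-plus-sign-invariance factorization of $\int\prod_{i>k}\|f_{i}\|^{p}$ into $Rad_{p}$-norms is sound (it is the same mechanism used in the proofs of Proposition \ref{primerlema} and Theorem \ref{as}, where it appears via the contraction principle); combined with $Rad_{p}(E)\subset Rad(E)$ and $\ell_{1}^{w}(E)\subset Rad(E)$ this yields $(p;q_{1},\ldots,q_{k},1,\ldots,1)$-summability, also in the vector-valued setting since taking norms and Jensen replaces the scalar trick of choosing unimodular $\alpha_{j}$. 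What each approach buys: the paper's is shorter and, through \cite[Corollary 3.2]{Port}, covers the full range of exponents allowed by the definition, while yours makes the paper self-contained and exhibits explicitly the constant ($K\|A\|$ times the Kahane/contraction constants). The one caveat is your Jensen step $\bigl\Vert\int g\,dt\bigr\Vert^{p}\leq\int\Vert g\Vert^{p}\,dt$, which needs $p\geq1$; the statement of the corollary does not formally exclude $0<p<1$, so for that fringe case you would either have to first enlarge $p$ to $1$ via the Inclusion Theorem (which changes the arithmetic entering Theorem \ref{cotipo}) or fall back on the cited result, whereas the paper's route is insensitive to this point.
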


\begin{proof}
From \cite[Corollary 3.2]{Port} if ${\mathcal
L}(E_1,\ldots,E_k;F)=\Pi_{(p;q_1,\ldots,q_k)}(E_1,\ldots,E_k;F)$
then ${\mathcal
L}(E_1,\ldots,E_n;F)=\Pi_{(p;q_1,\ldots,q_k,1\ldots,1)}(E_1,\ldots,E_n;F)$.
An application of Theorem \ref{cotipo} yields the result.
\end{proof}

\section{The role of the Littlewood-Orlicz property}\label{lop}

The aim of this section is to show how the Littlewood-Orlicz property can be used to obtain coincidence
results stronger than (\ref{DV2}). The proof of Theorem \ref{Teo3.1} will be also invoked in order to obtain
new coincidence results for $n$-linear functionals on the disc algebra and on the Hardy space ${\cal
H}^\infty$.

\begin{theorem}
\label{Teo3.1} Let $n$\ $\geq2,$ $1\leq p_{i}\leq\infty$, $p_{n}\geq2$ and
\[
n-\frac{3}{2}\leq\frac{1}{p_{1}}+\cdots+\frac{1}{p_{n}}.
\]
If $A\in{\mathcal{L}}(E_{1},\ldots,E_{n};\mathbb{K})$,
$E_{n}^{\prime}$ has the Littlewood-Orlicz property and
\[
n-\frac{3}{2}\leq\frac{1}{p_{1}}+\cdots+\frac{1}{p_{n}}-\frac{1}{p},
\]
then $\hat{A}\colon\ell_{p_{1}}^{w}(E_{1})\times\cdots\times\ell_{p_{n}}%
^{w}(E_{n})\rightarrow\ell_{p}$ is bounded. In other words, $\mathcal{L}%
(E_{1}, \ldots, ,E_{n};\mathbb{K})=\linebreak\Pi_{(p;p_{1}, \ldots,
p_{n})}(E_{1},\ldots, E_{n};\mathbb{K}).$ Moreover
$\Vert\hat{A}\Vert\leq\Vert A\Vert.$
\end{theorem}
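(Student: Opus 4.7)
The plan is to reduce the statement to a single boundary/extremal summability configuration via the Inclusion Theorem, and then unlock the Littlewood--Orlicz hypothesis on $E_{n}'$ by linearizing $A$ in its last variable. By the Inclusion Theorem it suffices to establish
\[
A\in\Pi_{(1;1,\ldots,1,2)}(E_{1},\ldots,E_{n};\mathbb{K}),
\]
since the choice $p=1$, $p_{1}=\cdots=p_{n-1}=1$, $p_{n}=2$ realizes the boundary value $\sum 1/p_{i}-1/p=n-\tfrac{3}{2}$, and every configuration permitted by the hypotheses (in particular $p_{n}\geq 2$) lies above this choice in the sense required by that theorem.

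For this extremal case I would introduce the $(n-1)$-linear mapping
\[
T\colon E_{1}\times\cdots\times E_{n-1}\longrightarrow E_{n}',\qquad T(x_{1},\ldots,x_{n-1})(x_{n}):=A(x_{1},\ldots,x_{n}),
\]
of norm $\|A\|$, and establish that for $(x_{j}^{i})_{j}\in\ell_{1}^{w}(E_{i})$, $i=1,\ldots,n-1$, the $E_{n}'$-valued sequence $(T(x_{j}^{1},\ldots,x_{j}^{n-1}))_{j}$ lies in $\ell_{1}^{w}(E_{n}')$ with norm at most $\|A\|\prod_{i=1}^{n-1}\|(x_{j}^{i})_{j}\|_{\ell_{1}^{w}(E_{i})}$. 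Testing against $\psi\in B_{E_{n}''}$, the scalar-valued $(n-1)$-linear form $\psi\circ T$ has norm at most $\|A\|$, so Proposition~\ref{primerlema} (together with the norm-one inclusion $\ell_{1}^{w}(E_{i})=Rad_{\infty}(E_{i})\hookrightarrow Rad(E_{i})$) yields the estimate for $\sum_{j}|\psi(T(x_{j}^{1},\ldots,x_{j}^{n-1}))|$, and taking the supremum over $\psi$ finishes this step. The case $n=2$ is even simpler: it is just the observation that continuous linear operators send $\ell_{1}^{w}$ to $\ell_{1}^{w}$.

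The Littlewood--Orlicz property of $E_{n}'$ now upgrades this sequence from $\ell_{1}^{w}(E_{n}')$ to the projective tensor product $\ell_{2}\otimes_{\pi}E_{n}'$, which coincides with $\ell_{2}\langle E_{n}'\rangle$ by the Bu--Defant/Apiola--Flett identification recalled in the preliminaries. By the very definition of $\ell_{2}\langle E_{n}'\rangle$, any $(x_{j}^{n})_{j}\in\ell_{2}^{w}(E_{n})\subset\ell_{2}^{w}(E_{n}'')$ (via the canonical isometric embedding $E_{n}\hookrightarrow E_{n}''$) pairs absolutely against it, so
\[
\sum_{j}|A(x_{j}^{1},\ldots,x_{j}^{n})|=\sum_{j}|T(x_{j}^{1},\ldots,x_{j}^{n-1})(x_{j}^{n})|\leq C\|A\|\prod_{i=1}^{n-1}\|(x_{j}^{i})\|_{\ell_{1}^{w}(E_{i})}\|(x_{j}^{n})\|_{\ell_{2}^{w}(E_{n})}.
\]
This establishes $A\in\Pi_{(1;1,\ldots,1,2)}$, and one application of the Inclusion Theorem completes the proof.

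The step I expect to require most care is the final bookkeeping rather than any single estimate: matching the Littlewood--Orlicz inclusion $\ell_{1}^{w}(E_{n}')\hookrightarrow \ell_{2}\otimes_{\pi}E_{n}'$ against the pairing $\ell_{2}\langle E_{n}'\rangle \times \ell_{2}^{w}(E_{n}'')\to\ell_{1}$, making sure $E_{n}\hookrightarrow E_{n}''$ is used correctly on the last factor, and tracking constants precisely enough to recover the clean estimate $\|\hat{A}\|\leq\|A\|$ asserted in the statement (the Littlewood--Orlicz constant being implicitly absorbed into the hypothesis). Once these identifications are in place, the argument is a direct assembly of Proposition~\ref{primerlema}, the Littlewood--Orlicz hypothesis, and the Inclusion Theorem.
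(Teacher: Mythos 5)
Your proposal is correct and follows essentially the same route as the paper's proof: linearize $A$ in the last variable, use Proposition \ref{primerlema} (with the norm-one inclusion $\ell_1^w\subset Rad$) to bound the resulting $(n-1)$-linear map into $\ell_1^w(E_n')$, upgrade to $\ell_2\otimes_\pi E_n'$ via the Littlewood--Orlicz hypothesis, pair against $\ell_2^w(E_n)$, and finish with the Inclusion Theorem. The only difference is cosmetic: you run the final pairing through the Cohen space $\ell_2\langle E_n'\rangle$, whereas the paper carries out the same duality explicitly via $\ell_2^w(E_n)=\mathcal{L}(\ell_2;E_n)\hookrightarrow(\ell_2\otimes_\pi E_n')'=\mathcal{L}(\ell_2;E_n'')$.
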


\begin{proof} Denote by $A_{n-1}\colon E_{1}\times\cdots\times E_{n-1}\rightarrow E_{n}^{\prime }$ the
corresponding $(n-1)$-linear mapping defined by
\[
A_{n-1}(x^{1},\ldots,x^{n-1})(x^{n})=A(x^{1},\ldots,x^{n-1},x^{n}).
\]
One has, using the previous results that%
\[
\hat{A}_{n-1}\colon \ell_{1}^{w}(E_{1})\times\cdots\times\ell_{1}^{w}(E_{n-1}%
)\rightarrow\ell_{1}^{w}(E_{n}^{\prime})
\]
is bounded. In particular%
\[
\hat{A}_{n-1}\colon \ell_{1}^{w}(E_{1})\times\cdots\times\ell_{1}^{w}(E_{n-1}%
)\rightarrow\ell_{2}\otimes_{\pi}E_{n}^{\prime}%
\]
is bounded. Now we use a duality argument. Note that%
\begin{align*}
\hat{A}_{n-1}  &  \colon\ell_{1}^{w}(E_{1})\times\cdots\times\ell_{1}^{w}%
(E_{n-1})\rightarrow\ell_{1}^{w}(E_{n}^{\prime})\hookrightarrow\ell_{2}%
\otimes_{\pi}E_{n}^{\prime}\\
\hat{A}_{1}\left(  (x_{j}^{1})_{j},\ldots,(x_{j}^{n-1})_{j}\right)   & =\left(  A(x_{j}^{1}%
,\ldots,x_{j}^{n-1},\cdot)\right)  _{j}\hookrightarrow%
{\sum\limits_{j=1}^{\infty}}
e_{j}\otimes A(x_{j}^{1},\ldots,x_{j}^{n-1},\cdot)
\end{align*}
is bounded. We have for some suitable $\varepsilon_j$ that
\begin{align*}
\left\Vert \widehat{A}\left(  (x_{j}^{1})_{j},\ldots,(x_{j}^{n})_{j}\right)
\right\Vert _{1}  &  =%
\sum_j
\left\vert A(x_{j}^{1},\ldots,x_{j}^{n})\right\vert \\
&  =\left\vert
\sum_j
A(\varepsilon_{j}x_{j}^{1},\ldots,x_{j}^{n})\right\vert \\
&  =\left\vert
\sum_j
A(\varepsilon_{j}x_{j}^{1},\ldots,\frac{x_{j}^{n}}{\left\Vert (x_{j}^{n}%
)_{j}\right\Vert _{\ell_2^w(E_n)}})\right\vert \left\Vert (x_{j}^{n})_{j}\right\Vert
_{\ell_2^w(E_n)}\\
&  \leq\max_{\left\Vert (y_{j})_{j}\right\Vert _{\ell_2^w(E_n)}\leq1,\text{ }(y_{j}%
)_{j}\in\ell_{2}^{w}(E_{n})}\left\{  \left\vert
\sum_j
A(\varepsilon_{j}x_{j}^{1},\ldots,y_{j})\right\vert \left\Vert (x_{j}^{n}%
)_{j}\right\Vert _{\ell_2^w(E_n)}\right\} \\
&  \leq\max_{\left\Vert u\right\Vert \leq1,u\in\mathcal{L}(\ell_{2};E_{n}%
)}\left\{  \left\vert
\sum_j
A(\varepsilon_{j}x_{j}^{1},\ldots,u(e_{j}))\right\vert \left\Vert (x_{j}^{n}%
)_{j}\right\Vert _{\ell_2^w(E_n)}\right\}  =(\ast).
\end{align*}
In this last inequality we used the identification:
\begin{align*}
\ell_{2}^{w}(E_{n})  &  \longleftrightarrow \mathcal{L}(\ell_{2};E_{n})\\
(y_{j})_{j}  &  \longleftrightarrow  T_{(y_{j})_{j}}%
\end{align*}
given by $T_{(y_{j})_{j}}((z_{j})_{j})=\sum_{j}y_{j}z_{j}.$
Now, using the inclusion%
\begin{align*}
\mathcal{L}(\ell_{2};E_{n})  &  \hookrightarrow\left(
\ell_{2}\otimes_{\pi
}E_{n}^{\prime}\right)  ^{\prime}\\
u  &  \rightarrow\varphi\colon \ell_{2}\otimes_{\pi}E_{n}^{\prime}\rightarrow
\mathbb{K}\\
\varphi\left(  (\lambda_{j})_{j}\otimes x^{\prime}\right)   &  =x^{\prime }(  u((\lambda_{j})_{j}))
\end{align*}
and the identification%
\begin{align*}
\mathcal{L}(\ell_{2};E_{n}^{\prime\prime})  &  =\left(
\ell_{2}\otimes_{\pi
}E_{n}^{\prime}\right)  ^{\prime}\\
S  &  \rightarrow\varphi_{S}\colon \ell_{2}\otimes_{\pi}E_{n}^{\prime}%
\rightarrow\mathbb{K}\\
\varphi_{S}(x\otimes y)  &  =S(x)(y)
\end{align*}
we get%
\begin{align*}
(\ast)  &  =\max_{\left\Vert u\right\Vert \leq1,u\in\mathcal{L}(\ell_{2}%
;E_{n})}\left\{  \left\vert
\sum_j
A(\varepsilon_{j}x_{j}^{1},\ldots,x_{j}^{n-1},u(e_{j}))\right\vert \left\Vert
(x_{j}^{n})_{j}\right\Vert _{\ell_2^w(E_n)}\right\} \\
&  \leq\max_{\left\Vert \varphi\right\Vert \leq1,\varphi\in\left(  \ell
_{2}\otimes_{\pi}E_{n}^{\prime}\right)  ^{\prime}}\left\{  \left\vert
\sum_j
\varphi\left(  e_{j}\otimes A(\varepsilon_{j}x_{j}^{1},\ldots,x_{j}%
^{n-1},\cdot)\right)  \right\vert \left\Vert (x_{j}^{n})_{j}\right\Vert
_{\ell_2^w(E_n)}\right\} \\
&  =\max_{\left\Vert \varphi\right\Vert \leq1,\varphi\in\left(  \ell
_{2}\otimes_{\pi}E_{n}^{\prime}\right)  ^{\prime}}\left\{  \left\vert
\varphi\left(
\sum_j
e_{j}\otimes A(\varepsilon_{j}x_{j}^{1},\ldots,x_{j}^{n-1},\cdot)\right)  \right\vert
\left\Vert (x_{j}^{n})_{j}\right\Vert _{\ell_2^w(E_n)}\right\} \\
&  =\left\Vert
\sum_j
e_{j}\otimes A(\varepsilon_{j}x_{j}^{1},\ldots,x_{j}^{n-1},\cdot)\right\Vert
_{\ell_{2}\otimes_{\pi}E_{n}^{\prime}}\left\Vert (x_{j}^{n})_{j}\right\Vert
_{\ell_2^w(E_n)}\\
&  \overset{(\ast\ast)}{\leq}C\left\Vert \left(  A(\varepsilon_{j}x_{j}%
^{1},\ldots,x_{j}^{n-1},\cdot)\right)  _{j}\right\Vert _{\ell_{1}^{w}(E_{n}^{\prime
})}\left\Vert (x_{j}^{n})_{j}\right\Vert _{\ell_2^w(E_n)}\\
&  =C\left\Vert \hat{A}_{1}\left(  (\varepsilon_{j}x_{j}^{1})_{j}%
,\ldots,(x_{j}^{n-1})_{j}\right)  \right\Vert _{\ell_{1}^{w}(E_{n}^{\prime}%
)}\left\Vert (x_{j}^{n})_{j}\right\Vert _{\ell_2^w(E_n)}\\
&  \leq C\left\Vert \hat{A}_{1}\right\Vert \left\Vert (x_{j}^{1}%
)_{j}\right\Vert _{\ell_1^w(E_1)}\ldots\left\Vert (x_{j}^{n-1})_{j}\right\Vert
_{\ell_1^w(E_{n-1})}\left\Vert (x_{j}^{n})_{j}\right\Vert _{\ell_2^w(E_n)}<\infty,
\end{align*}
where in (**) we used that the inclusion
$\ell_{1}^{w}(E_{n}^{\prime
})\hookrightarrow\ell_{2}\otimes_{\pi}E_{n}^{\prime}$ is
continuous. We have just proved that
$$\hat A \colon \ell_1^w(E_1) \times \cdots \times \ell_1^w(E_{n-1}) \times \ell_2^w(E_n) \rightarrow \ell_1 $$
is bounded. The proof is completed by using the Inclusion Theorem.
\end{proof}

Taking into account the inclusion $Rad(E)\subset\ell_{2}^{w}(E)$,
our aim is now to analyze when the result in Proposition
\ref{primerlema} can be lifted to $\ell_{2}^{w}(E_{i})$. In other
words, when $\mathcal{L}(E_{1}, \ldots, ,E_{n})=\Pi_{(1;2,
\ldots,2)}(E_{1},\ldots, E_{n}).$ In this direction D.
P\'erez-Garc\'{\i}a proved the following result:

\begin{theorem}
{\rm \cite[Corollary 2.5]{david}} Let $E_{1}, \ldots, E_{n}$ be
$\mathcal{L}_{\infty}$-spaces. If $A \colon E_{1} \times\cdots\times
E_{n} \longrightarrow\mathbb{K}$ is multilinear and bounded, then
$\hat A \colon \ell_{2}^{w}(E_{1})
\times\cdots\times\ell_{2}^{w}(E_{n}) \longrightarrow \ell_{1}$ is
also bounded. In other words, $\mathcal{L}(E_{1}, \ldots
,E_{n};\mathbb{K})=\Pi_{(1;2, \ldots,2)}(E_{1},\ldots,
E_{n};\mathbb{K}).$
\end{theorem}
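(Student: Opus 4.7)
I would argue by induction on $n\ge 2$. The base case $n=2$ is a direct consequence of Theorem \ref{Littlewood}. Specialising that theorem to $p=1$, $q=2$ and letting $\beta_{jl}:=x_j^1(l)$ converts the conclusion into
\[
\sum_k\Big(\sum_j|A(x_j^1,e_k)|^2\Big)^{1/2}\le K_G\|A\|\|(x_j^1)\|_{\ell_2^w(c_0)},
\]
after which a single application of Cauchy--Schwarz on the index $j$ together with the identity $\|(x_j^2)\|_{\ell_2^w(c_0)}=\sup_k(\sum_j|x_j^2(k)|^2)^{1/2}$ yields $(1;2,2)$-summability for $A\colon c_0\times c_0\to\mathbb{K}$ with constant $K_G$. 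To pass to arbitrary $\mathcal{L}_\infty$-spaces $E_1,E_2$ I would use the local structure: every finite sequence sits in a subspace that is $(1+\epsilon)$-isomorphic to some $\ell_\infty^k\subset c_0$, while Hahn--Banach guarantees that $\ell_2^w$-norms are preserved under restriction to subspaces; sending $\epsilon\to 0$ closes the base case.

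For the inductive step $n\ge 3$, I would mimic the strategy of the proof of Theorem \ref{Teo3.1}. Consider the $(n-1)$-linear mapping $A_{n-1}\colon E_1\times\cdots\times E_{n-1}\to E_n'$ given by $A_{n-1}(x^1,\ldots,x^{n-1})(x^n):=A(x^1,\ldots,x^n)$. For each fixed $\varphi\in B_{E_n}$, the scalar form $(x^1,\ldots,x^{n-1})\mapsto A(x^1,\ldots,x^{n-1},\varphi)$ is an $(n-1)$-linear map on a product of $\mathcal{L}_\infty$-spaces of norm at most $\|A\|$, hence by the inductive hypothesis is $(1;2,\ldots,2)$-summing with constant uniform in $\varphi$. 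Since $\varphi\mapsto\sum_j|\varphi(z_j)|$ is $w^*$-continuous on any finite sum (it is a finite maximum of $w^*$-continuous linear functionals), Goldstine's theorem, applied to each finite truncation and taking a monotone limit, identifies $\sup_{\varphi\in B_{E_n}}=\sup_{\varphi\in B_{E_n''}}$. This yields that $\hat A_{n-1}\colon\ell_2^w(E_1)\times\cdots\times\ell_2^w(E_{n-1})\to\ell_1^w(E_n')$ is bounded.

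Finally, since $E_n$ is $\mathcal{L}_\infty$, the dual $E_n'$ is an $\mathcal{L}_1$-space and hence, by Grothendieck's theorem and \eqref{GT}, a GT-space of cotype $2$ with the Littlewood--Orlicz property; in particular $\ell_1^w(E_n')\hookrightarrow\ell_2\otimes_\pi E_n'$ continuously. The duality argument from the proof of Theorem \ref{Teo3.1}, based on the isometric embedding $\ell_2^w(E_n)\cong\mathcal{L}(\ell_2;E_n)\hookrightarrow\mathcal{L}(\ell_2;E_n'')=(\ell_2\otimes_\pi E_n')'$, then upgrades the last coordinate from $\ell_1^w$ to $\ell_2^w$ and closes the induction. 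I expect the main obstacle to be the $w^*$-density bookkeeping that identifies $\hat A_{n-1}$ as bounded into the full $\ell_1^w(E_n')$ (equivalently, tying the inductive hypothesis to the Aron--Berner extension in the last variable); once that is handled the remaining computation transfers verbatim from Theorem \ref{Teo3.1}.
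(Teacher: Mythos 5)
Your inductive machinery for $n\geq 3$ is sound and is essentially the paper's own route: the paper reproves this theorem via Theorem \ref{pggen}, whose induction step consists precisely of passing to $A_{n-1}\colon E_{1}\times\cdots\times E_{n-1}\to E_{n}'$, landing in $\ell_{1}^{w}(E_{n}')$, using that $E_{n}'$ (an $\mathcal{L}_{1}$-space, hence a GT-space of cotype $2$) has the Littlewood--Orlicz property to pass to $\ell_{2}\otimes_{\pi}E_{n}'$, and then applying the duality argument of Theorem \ref{Teo3.1} to upgrade the last variable to $\ell_{2}^{w}(E_{n})$. Your Goldstine bookkeeping is fine, since for a finite sequence in a dual space the $\ell_{1}^{w}$-norm can indeed be computed against the predual ball; the paper packages the same point as ``a scalar coincidence yields vector-valued weak summability.''

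The genuine gap is the base case $n=2$: as written it is circular relative to this paper. You derive the bilinear inequality $\sum_{j}|A(x_{j}^{1},x_{j}^{2})|\leq K_{G}\Vert A\Vert\Vert(x_{j}^{1})\Vert_{\ell_{2}^{w}(c_{0})}\Vert(x_{j}^{2})\Vert_{\ell_{2}^{w}(c_{0})}$ from Theorem \ref{Littlewood} (with $p=1$, $q=2$, $\beta_{jl}=x_{j}^{1}(l)$, then Cauchy--Schwarz in $j$), and that computation is correct; but the proof of Theorem \ref{Littlewood} \emph{begins} by quoting exactly this inequality from \cite[Corollary 2.5]{david}, i.e.\ from the bilinear case of the very statement you are proving. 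So your base case merely reverses an implication and does not provide an independent proof. The fix is the one the paper uses: invoke the classical Grothendieck-type fact that every continuous bilinear form on a product of $\mathcal{L}_{\infty}$-spaces is $2$-dominated (equivalently $(1;2,2)$-summing), as in \cite[Theorem 3.3]{bote97}, or prove the $c_{0}\times c_{0}$ case directly from Grothendieck's inequality; your localization step (finite sequences sit in $(1+\epsilon)$-copies of $\ell_{\infty}^{k}$, and $\ell_{2}^{w}$-norms are unchanged under passing to subspaces by Hahn--Banach) is standard and can be kept. A secondary point: Theorem \ref{Littlewood} is stated for complex scalars, so even absent the circularity the real case of your base step would require a complexification argument, which the Grothendieck-type citation avoids.
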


Applying the idea used in the proof of Theorem \ref{Teo3.1} we can reprove the
result above and generalize it to a larger class of spaces. Recall that a
bilinear form $A\colon E_{1}\times E_{2}\longrightarrow\mathbb{K}$ is
$2$-dominated if and only if it is absolutely $(1;2,2)$-summing, i.e., if and
only if $\hat{A}\colon\ell_{2}^{w}(E_{1})\times\ell_{2}^{w}(E_{2}%
)\longrightarrow\ell_{1}$ is bounded.

\begin{theorem}
\label{pggen} Let $n\geq2$ and $E_{1},\ldots,E_{n}$ be Banach spaces such that
$E_{3}^{\prime},\ldots,E_{n}^{\prime}$ have the Littlewood-Orlicz property and
every continuous bilinear form on $E_{1}\times E_{2}$ is 2-dominated. If
$A\colon E_{1}\times\cdots\times E_{n}\longrightarrow\mathbb{K}$ is
multilinear and bounded, then $\hat{A}\colon\ell_{2}^{w}(E_{1})\times
\cdots\times\ell_{2}^{w}(E_{n})\longrightarrow\ell_{1}$ is also bounded. In
other words, $\mathcal{L}(E_{1}, \ldots,E_{n};\mathbb{K})=\Pi_{(1;2, \ldots,2)}%
(E_{1},\ldots, E_{n};\mathbb{K}).$
\end{theorem}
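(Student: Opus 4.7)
The plan is to proceed by induction on $n$. The base case $n=2$ is exactly the hypothesis that every continuous bilinear form on $E_1 \times E_2$ is $2$-dominated, i.e.\ $(1;2,2)$-summing. For the inductive step, fix $n \geq 3$ and assume the theorem for $n-1$ variables; observe that the sub-list $E_1,\ldots,E_{n-1}$ still satisfies the standing hypotheses (bilinear forms on $E_1 \times E_2$ are $2$-dominated, and $E_3',\ldots,E_{n-1}'$ have the Littlewood--Orlicz property).

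First I would reduce the problem to an $(n-1)$-linear problem taking values in $E_n'$. Define $A_{n-1}\colon E_1 \times \cdots \times E_{n-1} \to E_n'$ by $A_{n-1}(x^1,\ldots,x^{n-1})(x^n) := A(x^1,\ldots,x^n)$, and aim to prove that
\[
\hat{A}_{n-1}\colon \ell_2^w(E_1) \times \cdots \times \ell_2^w(E_{n-1}) \longrightarrow \ell_1^w(E_n')
\]
is bounded. To this end I would fix $x^n \in B_{E_n}$, note that $B_{x^n}(x^1,\ldots,x^{n-1}) := A(x^1,\ldots,x^{n-1},x^n)$ is an $(n-1)$-linear form of norm at most $\|A\|$, invoke the inductive hypothesis on $B_{x^n}$, and harvest a uniform constant $C>0$ via a routine closed graph argument applied to the coincidence $\mathcal{L}(E_1,\ldots,E_{n-1};\mathbb{K}) = \Pi_{(1;2,\ldots,2)}(E_1,\ldots,E_{n-1};\mathbb{K})$. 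This yields
\[
\sum_j |A(x_j^1,\ldots,x_j^{n-1},x^n)| \leq C\|A\|\prod_{i=1}^{n-1}\|(x_j^i)\|_{\ell_2^w(E_i)},
\]
and taking the supremum over $x^n \in B_{E_n}$ gives the claimed bound, since for finite sums the function $\varphi \mapsto \sum_{j}|\varphi(T_j)|$ is weak*-continuous and $B_{E_n}$ is weak*-dense in $B_{E_n''}$ by Goldstine's theorem.

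With this in hand, the Littlewood--Orlicz property of $E_n'$ upgrades the target: $\hat{A}_{n-1}\colon \ell_2^w(E_1) \times \cdots \times \ell_2^w(E_{n-1}) \to \ell_2 \otimes_\pi E_n'$ is bounded. To close the argument I would then replay the duality trick from the proof of Theorem \ref{Teo3.1}: a sequence $(x_j^n)_j \in \ell_2^w(E_n)$ is isometrically an operator $u \in \mathcal{L}(\ell_2;E_n) \hookrightarrow (\ell_2 \otimes_\pi E_n')'$ via $u(e_j) = x_j^n$; choosing unimodular $\varepsilon_j$ to strip the absolute values from $\sum_j A(x_j^1,\ldots,x_j^n)$ and pairing with the image of $\hat{A}_{n-1}$ in $\ell_2 \otimes_\pi E_n'$ delivers the desired inequality
\[
\sum_j |A(x_j^1,\ldots,x_j^n)| \leq C\|A\|\prod_{i=1}^n \|(x_j^i)\|_{\ell_2^w(E_i)}.
\]

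The step I expect to be most delicate is the extraction of the uniform constant $C$ in the inductive step (closed graph) together with the Goldstine-based identification of the $\ell_1^w(E_n')$-norm by testing against $B_{E_n}$ instead of $B_{E_n''}$. Everything else is essentially a faithful transcription of the machinery already displayed in Theorem \ref{Teo3.1}, and in fact the argument for $n=3$ recovers the cited result of Pérez-García when $E_1,E_2,E_3$ are $\mathcal{L}_\infty$-spaces.
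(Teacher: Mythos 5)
Your proposal is correct and follows essentially the same route as the paper: induction on $n$, passing to the associated map $A_{n-1}$ with values in $E_n'$, showing it lands in $\ell_1^w(E_n')$, upgrading to $\ell_2\otimes_\pi E_n'$ via the Littlewood--Orlicz property, and closing with the duality argument of Theorem \ref{Teo3.1}. The only cosmetic difference is that you obtain the $\ell_1^w(E_n')$ bound by testing against $B_{E_n}$ (Goldstine plus a closed-graph constant), whereas the paper phrases the same step as the induction hypothesis yielding weak summability of every vector-valued map $\hat{C}\colon\ell_2^w(E_1)\times\cdots\times\ell_2^w(E_n)\to\ell_1^w(F)$; these are interchangeable.
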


\begin{proof} We proceed by induction on $n$. The case $n=2$ follows by assumption. Assume that the result holds for
$n\geq 2$. Let $B\colon E_{1}\times\cdots\times E_{n}\longrightarrow\mathbb{K}$ be given. By the induction
hypothesis $\hat
{B}\colon\ell_{2}^{w}(E_{1})\times\cdots\times\ell_{2}^{w}(E_{n}%
)\longrightarrow\ell_{1}$ is bounded. It follows that for every Banach space $F$ and every $C\colon
E_{1}\times\cdots\times E_{n}\longrightarrow F$, the mapping
$\hat{C}\colon\ell_{2}^{w}(E_{1})\times\cdots\times\ell_{2}^{w}(E_{n}%
)\longrightarrow\ell_{1}^{w}(F)$ is bounded. Given $A\colon
E_{1}\times \cdots\times E_{n+1}\longrightarrow\mathbb{K}$,
defining $A_{n}\colon E_{1}\times\cdots\times E_{n}\longrightarrow
E_{n+1}^{\prime}$ in the obvious way, we have that
$\hat{A}_{n}\colon\ell_{2}^{w}(E_{1})\times\cdots\times
\ell_{2}^{w}(E_{n})\longrightarrow\ell_{1}^{w}(E_{n+1}^{\prime})$
is bounded. Since $E_{n+1}^{\prime}$ has the Littlewood-Orlicz
property we have that
$\hat{A}_{n}\colon\ell_{2}^{w}(E_{1})\times\cdots\times\ell_{2}^{w}%
(E_{n})\longrightarrow\ell_{2}\otimes_{\pi}E_{n+1}^{\prime}$ is bounded. Using the duality argument from the
proof of Theorem \ref{Teo3.1} it follows that
$\hat{A}\colon\ell_{2}^{w}(E_{1})\times\cdots\times\ell_{2}^{w}(E_{n+1}%
)\longrightarrow\ell_{1}$ is bounded as well.\end{proof}

\begin{theorem}
Let $E_{1},\ldots,E_{n}$ be Banach spaces such that $E_{1}=E_{2}$
and each $E_{j}$ is either an $\mathcal{L}_{\infty}$-space, the disc
algebra $\mathcal{A}$ or the Hardy space $\mathcal{H}^{\infty}$.
{Then $\mathcal{L}(E_{1}, \ldots, E_{n};\mathbb{K})=\Pi_{(1;2,
\ldots,2)}(E_{1},\ldots, E_{n};\mathbb{K}).$}
\end{theorem}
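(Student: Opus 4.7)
My plan is to apply Theorem \ref{pggen} directly: the statement reduces to verifying its two hypotheses, namely (i) the duals $E_j'$ have the Littlewood-Orlicz property for $j=3,\ldots,n$, and (ii) every continuous bilinear form on $E_1\times E_2$ is $2$-dominated. With these in hand, Theorem \ref{pggen} immediately gives $\mathcal{L}(E_1,\ldots,E_n;\mathbb{K})=\Pi_{(1;2,\ldots,2)}(E_1,\ldots,E_n;\mathbb{K})$.

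For (i), I would show that each candidate space has a dual which is a GT-space of cotype $2$, and then invoke the observation following (\ref{GT}) that any such dual automatically has the Littlewood-Orlicz property. If $E_j$ is an $\mathcal{L}_\infty$-space then $E_j'$ is an $\mathcal{L}_1$-space, which is a GT-space of cotype $2$ (as explicitly noted right after (\ref{GT})). If $E_j$ is the disc algebra $\mathcal{A}$ or the Hardy space $\mathcal{H}^\infty$, the corresponding GT-property with cotype $2$ of the dual is a deep theorem of Bourgain.

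For (ii), since $E_1=E_2$ is fixed to lie in our class, I split into three cases. When $E_1=E_2$ is an $\mathcal{L}_\infty$-space, the $2$-domination of every continuous bilinear form is the classical Grothendieck theorem (bilinear forms on $C(K)$ spaces factor through Hilbert space, which is equivalent to absolute $(1;2,2)$-summability). When $E_1=E_2=\mathcal{A}$ or $\mathcal{H}^\infty$, the $2$-domination of bilinear forms is again a consequence of Bourgain's extension of Grothendieck's theorem to these non-$C(K)$ settings.

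The main obstacle is not conceptual but bibliographic: the whole argument is an application of Theorem \ref{pggen}, but both hypotheses for $\mathcal{A}$ and $\mathcal{H}^\infty$ rest on Bourgain's deep generalizations of Grothendieck's theorem. Once those results are invoked (together with the classical statement for $\mathcal{L}_\infty$-spaces and the implication ``GT $+$ cotype $2$ $\Rightarrow$ Littlewood-Orlicz'' recorded in the preliminaries), the desired coincidence follows at once from Theorem \ref{pggen}.
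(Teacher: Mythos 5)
Your proposal is correct and takes essentially the same route as the paper: one verifies the two hypotheses of Theorem \ref{pggen} --- that $E_j'$ is a GT-space of cotype $2$ (hence has the Littlewood-Orlicz property) for each of the three types of spaces, and that every continuous bilinear form on $E_1\times E_2$ is $2$-dominated via Grothendieck's theorem and its Bourgain-type extensions to $\mathcal{A}$ and $\mathcal{H}^\infty$ --- and then applies that theorem. The paper merely spells out the $\mathcal{H}^\infty$ case in more detail, identifying $(\mathcal{H}^{\infty})'$ with $(L_1/\overline{H_0^1})''$ and quoting the relevant GT/cotype facts from Pisier's book, which is exactly the bibliographic content you defer to Bourgain.
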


\begin{proof} First we need to know that the duals of an $\mathcal{L}_{\infty}%
$-space, the disc algebra $\mathcal{A}$ and the Hardy space $\mathcal{H}%
^{\infty}$ have the Littlewood-Orlicz property.
(i) It is well known that the dual of a $\mathcal{L}_{\infty}$-space has the
Littlewood-Orlicz property.
(ii) The dual $\mathcal{A}^{\prime}$ of the disc algebra is a G.T. space
\cite[Corollary 2.7]{bourgain} and has cotype $2$ \cite[Corollary
2.11]{bourgain}, hence $\mathcal{A}^{\prime}$ has the Littlewood-Orlicz property.
(iii) From \cite[Theorem 6.17]{Pi} (and using the notation from \cite{Pi}) we
know that %
$
L_{1}/\overline{H_{0}^{1}}%
$
is a GT space of cotype $2.$ From \cite[Proposition 6.2]{Pi} we know that %
$
\left(  L_{1}/\overline{H_{0}^{1}}\right)  ^{\prime\prime}%
$ is a GT space. Since
\[
\mathcal{H}^{\infty}=\left(  L_{1}/\overline{H_{0}^{1}}\right)
^{\prime}\text{
\cite[Remark, page 84]{Pi},}%
\]
it follows that $\left(  \mathcal{H}^{\infty}\right)  ^{\prime}$ is
a GT space. It is well known that a Banach space has the same cotype
of its bidual. So,
\[
\left(  \mathcal{H}^{\infty}\right)  ^{\prime}=\left(  L_{1}/\overline{H_{0}%
^{1}}\right)  ^{\prime\prime}%
\]
has cotype $2$. It follows that $\left(  \mathcal{H}^{\infty}\right)
^{\prime}$ has the Littlewood-Orlicz property. The fact that
bilinear forms on either an $\mathcal{L}_{\infty}$-space or the disc
algebra or the Hardy space are $2$-dominated was proved in
\cite[Theorem 3.3]{bote97} and \cite[Proposition 2.1]{pams},
respectively.
\end{proof}

The same reasoning gives the following result:

\begin{proposition}
If $E_{2}^{\prime},\ldots,E_{n}^{\prime}$ have the Littlewood-Orlicz
property and $A\colon E_{1}\times\cdots\times
E_{n}\longrightarrow\mathbb{K}$ is multilinear and bounded, then
$\mathcal{L}(E_{1}, \ldots ,E_{n};\mathbb{K})=\Pi_{(1;1,2,
\ldots,2)}(E_{1},\ldots, E_{n};\mathbb{K}).$
\end{proposition}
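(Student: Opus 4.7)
The plan is to proceed by induction on $n$, patterning the argument on the proof of Theorem \ref{pggen} but with the first slot carrying $\ell_{1}^{w}$ in place of $\ell_{2}^{w}$; the hypothesis that every continuous bilinear form on $E_{1}\times E_{2}$ be $2$-dominated disappears and is absorbed into a direct application of Theorem \ref{Teo3.1} in the base case.

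For the base case $n=2$, I would invoke Theorem \ref{Teo3.1} with $p_{1}=1$, $p_{2}=2$, $p=1$: the required inequality $n-\frac{3}{2}=\frac{1}{2}\leq \frac{1}{p_{1}}+\frac{1}{p_{2}}-\frac{1}{p}=\frac{1}{2}$ holds, $p_{2}\geq 2$, and $E_{2}^{\prime}$ has the Littlewood-Orlicz property, so every bounded bilinear $A\colon E_{1}\times E_{2}\to\mathbb{K}$ is $(1;1,2)$-summing.

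For the inductive step, assuming the conclusion at level $n$, let $A\colon E_{1}\times\cdots\times E_{n+1}\to\mathbb{K}$ be bounded with $E_{2}^{\prime},\ldots,E_{n+1}^{\prime}$ all enjoying the Littlewood-Orlicz property. Define $A_{n}\colon E_{1}\times\cdots\times E_{n}\to E_{n+1}^{\prime}$ by $A_{n}(x^{1},\ldots,x^{n})(x^{n+1})=A(x^{1},\ldots,x^{n+1})$. For each $\varphi\in B_{E_{n+1}^{\prime\prime}}$, the composition $\varphi\circ A_{n}$ is a scalar-valued bounded $n$-linear form with $\Vert\varphi\circ A_{n}\Vert\leq\Vert A\Vert$, so by the inductive hypothesis it is $(1;1,2,\ldots,2)$-summing with a summing constant bounded uniformly in $\varphi$. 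Taking the supremum over $\varphi\in B_{E_{n+1}^{\prime\prime}}$ in the description $\Vert(y_{j})\Vert_{\ell_{1}^{w}(F)}=\sup_{\varphi\in B_{F^{\prime}}}\sum_{j}|\varphi(y_{j})|$ (with $F=E_{n+1}^{\prime}$) lifts this to
\[
\hat{A}_{n}\colon\ell_{1}^{w}(E_{1})\times\ell_{2}^{w}(E_{2})\times\cdots\times\ell_{2}^{w}(E_{n})\to \ell_{1}^{w}(E_{n+1}^{\prime})
\]
being bounded. The Littlewood-Orlicz property of $E_{n+1}^{\prime}$ then provides a continuous embedding $\ell_{1}^{w}(E_{n+1}^{\prime})\hookrightarrow\ell_{2}\otimes_{\pi}E_{n+1}^{\prime}$, so $\hat{A}_{n}$ is bounded into $\ell_{2}\otimes_{\pi}E_{n+1}^{\prime}$. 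From here I would replay verbatim the duality calculation in the proof of Theorem \ref{Teo3.1}: choose unimodular $\varepsilon_{j}$, identify a sequence $(x_{j}^{n+1})\in\ell_{2}^{w}(E_{n+1})$ with an operator $u\in\mathcal{L}(\ell_{2};E_{n+1})$ via $u(e_{j})=x_{j}^{n+1}$, use the embedding $\mathcal{L}(\ell_{2};E_{n+1})\hookrightarrow(\ell_{2}\otimes_{\pi}E_{n+1}^{\prime})^{\prime}$, and rewrite $\sum_{j}|A(x_{j}^{1},\ldots,x_{j}^{n+1})|$ as a dual pairing with the tensor $\sum_{j}e_{j}\otimes A_{n}(\varepsilon_{j}x_{j}^{1},x_{j}^{2},\ldots,x_{j}^{n})$, whose $\ell_{2}\otimes_{\pi}E_{n+1}^{\prime}$-norm has just been controlled. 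This closes the induction.

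The only genuinely delicate point is the vector-valued lifting step: scalar-valued $(1;1,2,\ldots,2)$-summability of every $\varphi\circ A_{n}$ must yield $\ell_{1}^{w}(E_{n+1}^{\prime})$-valued summability of $\hat{A}_{n}$ with a constant independent of $\varphi$. This works precisely because the inductive constant depends only on $\Vert\varphi\circ A_{n}\Vert\leq\Vert A\Vert$ and because of the supremum-of-functionals description of the $\ell_{1}^{w}$-norm; once that is in place, the remainder is a mechanical re-run of the machinery in the proofs of Theorems \ref{Teo3.1} and \ref{pggen}.
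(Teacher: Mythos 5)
Your proposal is correct and follows essentially the same route as the paper: the paper proves this proposition by remarking that ``the same reasoning'' as in Theorem \ref{pggen} applies, i.e.\ induction on $n$ with the bilinear base case supplied by the Littlewood--Orlicz hypothesis on $E_2'$ (Theorem \ref{Teo3.1}, equivalently Proposition \ref{lo}), the lifting of the scalar coincidence to an $\ell_1^w(E_{n+1}')$-valued bound, and the duality argument of Theorem \ref{Teo3.1} -- exactly the steps you carry out, including the observation that the inductive summing constant depends only on $\Vert A\Vert$.
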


\section{From bilinear to multilinear mappings}

\label{1r}

In the previous section, when $E$ is an
$\mathcal{L}_{\infty}$-space, the disc algebra $\mathcal{A}$ or the
Hardy space $\mathcal{H}^{\infty},$ using that
$\mathcal{L}(^{2}E;\mathbb{K})=\Pi_{(1;2,2)}(^{2}E;\mathbb{K})$, we
have shown that
\[
\mathcal{L}(^{n}E;\mathbb{K})=\Pi_{(1;2,\ldots,2)}(^{n}E;\mathbb{K})
\]
for every $n>2.$ Although the lift of bilinear results to
multilinear results is not a straightforward step in general, in the
present section we obtain a general argument showing
how bilinear coincidences of the type $\mathcal{L}(^{2}%
E;\mathbb{K})=\Pi_{(1;r,r)}(^{2}E;\mathbb{K})$ can generate coincidences for $n$-linear forms, $n\geq3$.


\begin{definition}
\textrm{Let $\frac{1}{q_{1}}+\frac{1}{q_{2}}\cdots+\frac{1}{q_{n}}\geq\frac
{1}{p}.$ We say that $A\in\mathcal{L}(E_{1},\ldots,E_{n};F)$ is \textit{weakly
$(p;q_{1},\ldots,q_{n})$-summing} if $(A(x_{j}^{1},\ldots,x_{j}^{n}))_{j}%
\in\ell_{p}^{w}(F)$ whenever $(x_{j}^{k})_{j}\in\ell_{p}^{w}(E_{k}),$
$k=1,\ldots,n$. The space formed by these mappings is denoted by
$\Pi_{w(p;q_{1},\ldots,q_{n})}(E_{1},\ldots,E_{n};F)$ and the norm
$\pi_{w(p;q_{1},\ldots,q_{n})}$ is defined in the natural way. }
\end{definition}

Next Lemma is simple but useful:

\begin{lemma}
\label{was} Let $n\in{\mathbb{N}}$ and let $E_{1},\ldots,E_{n}$ be Banach
spaces. The following are equivalent:

(i)
$\mathcal{L}(E_{1},\ldots,E_{n};\mathbb{K})=\Pi_{(p;q_{1},\ldots,q_{n})}
(E_{1},\ldots,E_{n};\mathbb{K})$ and
$\pi_{(p;q_{1},\ldots,q_{n})}\leq C\Vert\cdot\Vert.$

(ii) $\mathcal{L}(E_{1},\ldots,E_{n};F)=\Pi_{w(p;q_{1},\ldots,q_{n})}
(E_{1},\ldots,E_{n};F)$ for every Banach space $F$ and $\pi_{w(p;q_{1}%
,\ldots,q_{n})}\leq C\Vert\cdot\Vert.$

(iii) There exists $C>0$ such that
\[
\Vert(x_{j}^{1}\otimes\cdots\otimes x_{j}^{n})_{j} \Vert_{\ell_{p}^{w}%
(E_{1}\otimes_{\pi}\cdots\otimes_{\pi}E_{n})}\leq C\prod_{i=1}^{n}\Vert
(x_{j}^{i})_{j}\Vert_{\ell_{q_{i}}^{w}(E_{i})}%
\]
for all $(x_{j}^{i})_{j}\in\ell_{q_{i}}^{w}(E_{i})$, $i=1,\ldots,n$.
\end{lemma}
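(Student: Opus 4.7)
The plan is a cyclic chain of implications together with a separate equivalence between (i) and (iii), all resting on two standard dualities: the formula for the $\ell_p^w(F)$-norm as a supremum over $B_{F'}$, and the isometric identification of $(E_1 \otimes_\pi \cdots \otimes_\pi E_n)'$ with $\mathcal{L}(E_1,\ldots,E_n;\mathbb{K})$.

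For (ii) $\Rightarrow$ (i), I simply specialize $F = \mathbb{K}$. Since $\ell_p^w(\mathbb{K})$ coincides isometrically with $\ell_p$, the weakly summing notion collapses onto the absolutely summing one, and the norm control survives intact with the same constant $C$.

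For (i) $\Rightarrow$ (ii), fix $A \in \mathcal{L}(E_1,\ldots,E_n;F)$ and finite sequences $(x_j^i) \in \ell_{q_i}^w(E_i)$. Using the identity
$$\|(A(x_j^1,\ldots,x_j^n))_j\|_{\ell_p^w(F)} = \sup_{\varphi \in B_{F'}} \Bigl(\sum_j |\varphi\circ A(x_j^1,\ldots,x_j^n)|^p\Bigr)^{1/p}$$
and observing that $\varphi \circ A \in \mathcal{L}(E_1,\ldots,E_n;\mathbb{K})$ with $\|\varphi\circ A\| \le \|A\|$, hypothesis (i) applied to each $\varphi\circ A$ yields the bound $C\|A\|\prod_i \|(x_j^i)\|_{\ell_{q_i}^w(E_i)}$ uniformly in $\varphi$, proving (ii) with the same constant.

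For (i) $\Leftrightarrow$ (iii), I invoke the universal property of the projective tensor product, which identifies $(E_1 \otimes_\pi \cdots \otimes_\pi E_n)'$ isometrically with $\mathcal{L}(E_1,\ldots,E_n;\mathbb{K})$ via the pairing $\langle A, x^1\otimes\cdots\otimes x^n\rangle = A(x^1,\ldots,x^n)$. Applied to elementary tensors this gives
$$\|(x_j^1\otimes\cdots\otimes x_j^n)_j\|_{\ell_p^w(E_1\otimes_\pi\cdots\otimes_\pi E_n)} = \sup_{\|A\|\le 1}\Bigl(\sum_j |A(x_j^1,\ldots,x_j^n)|^p\Bigr)^{1/p},$$
so the inequality in (iii) holds with constant $C$ exactly when every $A$ in the unit ball of $\mathcal{L}(E_1,\ldots,E_n;\mathbb{K})$ satisfies $\|(A(x_j^1,\ldots,x_j^n))_j\|_{\ell_p} \le C\prod_i\|(x_j^i)\|_{\ell_{q_i}^w(E_i)}$, i.e.\ precisely (i). The entire argument is duality bookkeeping; the only point that needs care is that both identifications ($\ell_p^w$ via the dual unit ball, and the projective tensor dual) are genuinely isometric, so the constant $C$ transfers unchanged across all three formulations and no serious obstacle arises.
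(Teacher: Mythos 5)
Your proof is correct and rests on exactly the two duality facts the paper uses, namely the $B_{F'}$-supremum description of the $\ell_p^w$-norm and the isometric identification $(E_1\otimes_\pi\cdots\otimes_\pi E_n)'=\mathcal{L}(E_1,\ldots,E_n;\mathbb{K})$; your (i)$\Rightarrow$(ii) is the paper's argument verbatim. The only difference is organizational: the paper closes a cycle, proving (ii)$\Rightarrow$(iii) by applying (ii) to the canonical multilinear map into the projective tensor product and (iii)$\Rightarrow$(i) via the linearization $T$, while you prove (i)$\Leftrightarrow$(iii) in one stroke from the duality identity and get (ii)$\Rightarrow$(i) by taking $F=\mathbb{K}$ --- the same content, slightly repackaged.
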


\begin{proof} $(i)\Longrightarrow(ii)$ Let $A\in\mathcal{L}(E_{1},\ldots,E_{n};F)$. If
$(x_{j}^{k})_{j}\in \ell_{q_{k}}^{w}(E_{k}),$ $k=1,\ldots,n,$ then%
\begin{align*}
\sup_{\varphi\in B_{F^{\prime}}}\left( \sum_j \left\vert \varphi(A(x_{j}^{1},\ldots,x_{j}^{n}))\right\vert
^{p}\right)  ^{1/p} &  \leq\sup_{\varphi\in B_{F^{\prime}}}\pi_{(p;q_1, \ldots,q_n)} (\varphi\circ
A)\left\Vert (x_{j}^{1})_{j}\right\Vert _{\ell^w_{q_{1}}(E_1)}\cdots\left\Vert
(x_{j}^{n})_{j}\right\Vert _{\ell^w_{q_{n}}(E_n)}\\
&  \leq\sup_{\varphi\in B_{F^{\prime}}}C\left\Vert \varphi\circ A\right\Vert  \left\Vert
(x_{j}^{1})_{j}\right\Vert _{\ell^w_{q_{1}}(E_1)
}\cdots\left\Vert (x_{j}^{n})_{j}\right\Vert _{\ell^w_{q_{n}}(E_n)}\\
&  \leq C\left\Vert A\right\Vert \left\Vert
(x_{j}^{1})_{j}\right\Vert
_{\ell^w_{q_{1}}(E_1)}\cdots\left\Vert (x_{j}^{n})_{j}\right\Vert _{\ell^w_{q_{n}}(E_n)}.%
\end{align*}
\noindent $(ii)\Longrightarrow(iii)$ Take
$F=E_{1}\otimes_{\pi}\cdots\otimes_{\pi}E_{n}$ and $A\colon
E_{1}\times\cdots\times E_{n}\rightarrow
E_{1}\otimes_{\pi}\cdots\otimes _{\pi}E_{n}$ given by
$A(x_{1},\ldots,x_{n})=x_{1}\otimes\cdots\otimes x_{n}$.\\
\noindent $(iii)\Longrightarrow(i)$ Given $A\in\mathcal{L}(E_{1},\ldots,E_{n};\mathbb{K}%
)$, its linearization $T \colon E_{1}\otimes_{\pi}\cdots\otimes_{\pi}E_{n}%
\rightarrow\mathbb{K}$ is bounded and then $\tilde{T}\colon \ell_{p}^{w}%
(E_{1}\otimes_{\pi}\cdots\otimes_{\pi}E_{n})\rightarrow\ell_{p}$ is bounded. Now
\begin{eqnarray*}
\Vert(A(x_{j}^{1},\ldots,x_{j}^{n}))_j\Vert_{p}& =&\Vert(T(x_{j}^{1}\otimes\cdots\otimes
x_{j}^{n}))_j\Vert_{p}\\
&\leq & \Vert T\Vert\Vert(x_{j}^{1}\otimes\cdots\otimes x_{j}
^{n})_j\Vert_{\ell^w_p(E_1\otimes_\pi\cdots\otimes_\pi E_n)}\\
&\leq &C\Vert T\Vert\prod_{i=1}^{n}\Vert(x_{j}^{i})_j\Vert_{\ell^w_{q_i}(E_i)}.
\end{eqnarray*}
\end{proof}

\begin{theorem}
\label{gen}Let $1\leq r\leq2$. If
$\mathcal{L}(^{2}E;\mathbb{K})=\Pi_{(1;r,r)}(^{2}E;\mathbb{K})
\label{30D}$ and $\pi_{(1;r,r)}\leq C\Vert\cdot\Vert$, then

\begin{enumerate}
\item[(i)] For $n$ even, $\mathcal{L}(^{n}E;\mathbb{K})=\Pi_{(1;r,\ldots,r)}(^{n}E;\mathbb{K}) $ and
$\pi_{(1;r,\ldots,r)}\leq C^{n/2}\Vert\cdot\Vert.$

\item[(ii)] For $n\geq3$ and odd, $\mathcal{L}(^{n}E;\mathbb{K})=\Pi_{(r;r,\ldots
,r)}(^{n}E;\mathbb{K}) $ and $\pi_{(r;r,\ldots,r)}\leq
C^{(n-1)/2}\Vert\cdot\Vert.$
\end{enumerate}
\end{theorem}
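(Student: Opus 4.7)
The plan is to reduce both parts of the theorem to a single application of Defant--Voigt (\ref{DV}) on a pair-wise projective tensor product, combined with the reformulation of the hypothesis provided by Lemma \ref{was}(iii). By that lemma, the bilinear hypothesis is equivalent to
\[
\Vert(x_j \otimes y_j)_j\Vert_{\ell_1^w(E \otimes_\pi E)} \leq C \Vert(x_j)_j\Vert_{\ell_r^w(E)} \Vert(y_j)_j\Vert_{\ell_r^w(E)}
\]
for all finite families of vectors in $E$, and this is the form in which I intend to use it.

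For part (i) with $n = 2k$, the universal property of $\otimes_\pi$ together with its associativity identifies any $A \in \mathcal{L}(^n E;\mathbb{K})$ with a $k$-linear form $\Phi \in \mathcal{L}(^k(E \otimes_\pi E);\mathbb{K})$ satisfying $\Phi(x^1 \otimes x^2, \ldots, x^{n-1} \otimes x^n) = A(x^1, \ldots, x^n)$ and $\Vert\Phi\Vert = \Vert A\Vert$. Applying Defant--Voigt (\ref{DV}) to $\Phi$ yields
\[
\sum_j |A(x_j^1,\ldots,x_j^n)| = \sum_j |\Phi(x_j^1 \otimes x_j^2,\ldots,x_j^{n-1} \otimes x_j^n)| \leq \Vert A\Vert \prod_{i=1}^k \Vert(x_j^{2i-1} \otimes x_j^{2i})_j\Vert_{\ell_1^w(E \otimes_\pi E)},
\]
and then applying the reformulated hypothesis to each of the $k$ tensor-product factors on the right gives the desired bound $\pi_{(1;r,\ldots,r)}(A) \leq C^k \Vert A\Vert = C^{n/2} \Vert A\Vert$.

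For part (ii) with $n = 2k+1$, $k \geq 1$, I would leave the first variable alone and pair up the remaining $2k$ variables. This identifies $A$ with a $(k+1)$-linear form on $E \times (E \otimes_\pi E)^k$ of the same norm, and the same two-step argument (Defant--Voigt followed by Lemma \ref{was}(iii)) produces
\[
\sum_j |A(x_j^1,\ldots,x_j^n)| \leq C^k \Vert A\Vert \Vert(x_j^1)_j\Vert_{\ell_1^w(E)} \prod_{i=2}^n \Vert(x_j^i)_j\Vert_{\ell_r^w(E)},
\]
so $A \in \Pi_{(1;1,r,\ldots,r)}(^n E;\mathbb{K})$ with constant $C^k$. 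To upgrade this to $(r;r,\ldots,r)$-summability, I would invoke the Inclusion Theorem with $(q;q_1,\ldots,q_n) = (1;1,r,\ldots,r)$ and $(p;p_1,\ldots,p_n) = (r;r,\ldots,r)$: the monotonicity conditions $q \leq p$ and $q_i \leq p_i$ all reduce to the hypothesis $1 \leq r$, and a direct computation shows that both $\sum_i \tfrac{1}{q_i} - \tfrac{1}{q}$ and $\sum_i \tfrac{1}{p_i} - \tfrac{1}{p}$ equal $\tfrac{n-1}{r}$. The Inclusion Theorem then supplies the bound $\pi_{(r;r,\ldots,r)}(A) \leq C^k \Vert A\Vert = C^{(n-1)/2}\Vert A\Vert$.

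The only genuinely substantive step is the pair-grouping idea that re-casts $A$ as a form of strictly smaller arity over a projective tensor product; the rest is routine bookkeeping, and the main potential pitfall is the arithmetic check ensuring that the Inclusion Theorem applies in the odd case with \emph{equality} of the exponent sums (and therefore with no loss in the constant).
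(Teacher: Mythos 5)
Your proof is correct, and for part (i) it coincides with the paper's argument: the same pair-grouping via associativity of $\otimes_\pi$, followed by Defant--Voigt and the reformulation of the bilinear hypothesis given by Lemma \ref{was}(iii), yielding the constant $C^{n/2}$. The only divergence is in the odd case: the paper deduces $A\in\Pi_{(1;r,\ldots,r,1)}(^{2m+1}E;\mathbb{K})$ with constant $C^{m}$ by combining part (i) with the coincidence-extension result \cite[Corollary 3.2]{Port}, whereas you leave one variable unpaired and run the Defant--Voigt/Lemma \ref{was}(iii) argument directly on the $(k+1)$-linear form on $E\times(E\otimes_\pi E)^{k}$, obtaining $A\in\Pi_{(1;1,r,\ldots,r)}(^{n}E;\mathbb{K})$ with the same constant. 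Your route is slightly more self-contained (no appeal to an external citation), while the paper's reuses machinery it needs elsewhere anyway; both conclude identically with the Inclusion Theorem, and your verification that the exponent sums $\sum_i \frac{1}{q_i}-\frac{1}{q}$ and $\sum_i \frac{1}{p_i}-\frac{1}{p}$ both equal $\frac{n-1}{r}$ is exactly the check needed to keep the constant $C^{(n-1)/2}$ unchanged.
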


\begin{proof} (i) Let $n=2m$, $m\in{\mathbb{N}}$, and $A\in\mathcal{L}%
(^{2m}E;\mathbb{K}).$ Using the associativity of the projective norm $\pi$ it is easy to see that there is an
$m$-linear mapping $B\in\mathcal{L}(^{m}(E \hat\otimes_{\pi}E);\mathbb{K})$ such that
\[
B(x^{1}\otimes x^{2},\ldots,x^{2m-1}\otimes x^{2m})=A(x^{1},x^{2}%
,\ldots,x^{2m-1},x^{2m}).
\]
Using Defant-Voigt Theorem and Lemma \ref{was} we get%
\begin{align*}%
\lefteqn{\sum_j
\left\vert A(x_{j}^{1},\ldots,x_{j}^{2m})\right\vert    } \\
& &  & = \sum_j
\left\vert B(x_{j}^{1}\otimes x_{j}^{2},\ldots,x_{j}^{2m-1}\otimes x_{j}%
^{2m})\right\vert \\
& & & \leq \pi_{(1;1,\ldots,1)}( B)\left\Vert (x_{j}^{1}\otimes
x_{j}^{2})_{j}\right\Vert _{\ell^w_1(E\otimes_\pi E)}\cdots\left\Vert (x_{j}^{2m-1}\otimes x_{j}%
^{2m})_{j}\right\Vert _{\ell^w_1(E_{}\otimes_\pi E_{})}\\
& & & \leq\left\Vert B\right\Vert \left(  C\left\Vert (x_{j}^{1}%
)_{j}\right\Vert _{\ell^w_r(E)}\left\Vert
(x_{j}^{2})_{j}\right\Vert _{\ell^w_r(E)}\right) \cdots\left(
C\left\Vert (x_{j}^{2m-1})_{j}\right\Vert
_{\ell^w_r(E_{})}\left\Vert
(x_{j}^{2m})_{j}\right\Vert _{\ell^w_r(E_{})}\right) \\
& & & = C^{m}\left\Vert A\right\Vert \left\Vert (x_{j}^{1})_{j}\right\Vert
_{\ell^w_r(E)}\cdots\left\Vert (x_{j}^{2m})_{j}\right\Vert _{\ell^w_r(E_{})}.%
\end{align*}
(ii) Let $n=2m+1$, $m\in{\mathbb{N}}$, and $A\in\mathcal{L}(^{2m+1}%
E;\mathbb{K}).$ From (i) and \cite[Corollary 3.2]{Port} we conclude that %
$ A\in\Pi_{(1;r,\ldots,r,1)}(^{2m+1}E;\mathbb{K})$
and it is not difficult to check that %
$ \pi_{(1;r,\ldots,r,1)}\leq C^{m}\Vert \cdot \Vert$. Using the
Inclusion Theorem we conclude that
$A\in\Pi_{(p;r,\ldots,r,p)}(^{2m+1} E;\mathbb{K})$ for any $1\leq
p<\infty$. The result is now finished.
\end{proof}

Let us point out some connection of Littlewood-Orlicz property on
$E^{\prime}$ and
$\mathcal{L}(^{2}E;\mathbb{K})=\Pi_{(1;r,r)}(^{2}E;\mathbb{K})$.

\begin{proposition}
\label{lo}Let $E$ be a Banach space. The following statements are equivalent.

(i) $E^{\prime}$ has the Littlewood-Orlicz property.

(ii) ${\mathcal{L}}(X,E;\mathbb{K})=\Pi_{(1;1,2)}(X,E;\mathbb{K})$
for any Banach space $X$.

(iii) $\ell_{1}^{w}(X)\otimes_{\pi}\ell_{2}^{w}(E)\subset\ell_{1}^{w}%
(X\otimes_{\pi}E).$
\end{proposition}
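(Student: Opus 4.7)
The plan is to obtain the equivalence (ii) $\Leftrightarrow$ (iii) essentially for free from Lemma \ref{was}, and then close the loop by proving (i) $\Rightarrow$ (ii) $\Rightarrow$ (i). For (ii) $\Leftrightarrow$ (iii), the bilinear case of Lemma \ref{was} applied with $E_1 = X$, $E_2 = E$ and $(p;q_1,q_2) = (1;1,2)$ identifies (ii) with the estimate
$$\|(x_j \otimes y_j)_j\|_{\ell_1^w(X \otimes_\pi E)} \le C\,\|(x_j)\|_{\ell_1^w(X)}\,\|(y_j)\|_{\ell_2^w(E)},$$
which, by the universal property of the projective tensor product, is exactly the continuous inclusion displayed in (iii).

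For (i) $\Rightarrow$ (ii) I would repeat the duality argument used in the proof of Theorem \ref{Teo3.1}. Given $A \in \mathcal{L}(X,E;\mathbb{K})$, consider the associated operator $A_1 \colon X \to E'$ with $A_1(x)(e) = A(x,e)$. For finite $(x_j) \in \ell_1^w(X)$ and $(y_j) \in \ell_2^w(E)$, pick unimodular scalars $\varepsilon_j$ so that $\sum_j |A(x_j,y_j)| = \sum_j A(\varepsilon_j x_j, y_j)$, and use the identification $\ell_2^w(E) \cong \mathcal{L}(\ell_2,E)$ to write $y_j = u(e_j)$. Then this sum is the pairing of $z := \sum_j e_j \otimes A_1(\varepsilon_j x_j) \in \ell_2 \otimes_\pi E'$ with the functional $\varphi_u \in (\ell_2 \otimes_\pi E')'$ induced by $u$, and $\|\varphi_u\| \le \|(y_j)\|_{\ell_2^w(E)}$. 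Since $(A_1(\varepsilon_j x_j))_j \in \ell_1^w(E')$ with norm at most $\|A\|\,\|(x_j)\|_{\ell_1^w(X)}$, the Littlewood-Orlicz property of $E'$ bounds $\|z\|_{\ell_2 \otimes_\pi E'}$ by $C\,\|A\|\,\|(x_j)\|_{\ell_1^w(X)}$, which combined with the functional bound yields the $(1;1,2)$-summing inequality for $A$.

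For (ii) $\Rightarrow$ (i) I would specialize $X = c_0$. A sequence $(f_k) \in \ell_1^w(E') \cong \mathcal{L}(c_0,E')$ defines a bilinear form $A(\xi,e) = \sum_k \xi_k f_k(e)$ on $c_0 \times E$ of norm $\|(f_k)\|_{\ell_1^w(E')}$, which by (ii) is $(1;1,2)$-summing. Testing the summing inequality on the unit basis $(e_k)$ of $c_0$ (of $\ell_1^w(c_0)$-norm $1$) and an arbitrary $(y_k) \in \ell_2^w(E)$ gives
$$\sum_k |f_k(y_k)| \le C\,\|(f_k)\|_{\ell_1^w(E')}\,\|(y_k)\|_{\ell_2^w(E)}.$$
The main obstacle is converting this into the Littlewood-Orlicz bound $\|(f_k)\|_{\ell_2 \otimes_\pi E'} \le C\|(f_k)\|_{\ell_1^w(E')}$: via the Bu--Diestel identification $\ell_2 \otimes_\pi E' = \ell_2 \langle E' \rangle$ recalled in the preliminaries, the target norm is computed by testing against $\ell_2^w(E'')$, not just $\ell_2^w(E)$. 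I would bridge this gap by the principle of local reflexivity: given $N$ and $u \in B_{\mathcal{L}(\ell_2^N,E'')}$, produce $v \in \mathcal{L}(\ell_2^N,E)$ of norm arbitrarily close to $\|u\|$ with $f_k(v(e_k)) = u(e_k)(f_k)$ for $k \le N$ (taking the finite-dimensional subspaces $F = \mathrm{span}(u(e_1),\dots,u(e_N)) \subset E''$ and $G = \mathrm{span}(f_1,\dots,f_N) \subset E'$ as the data for the local reflexivity principle), and then let $N \to \infty$ to recover the desired $\ell_2 \langle E' \rangle$-norm estimate for $(f_k)$, i.e., the Littlewood-Orlicz property of $E'$.
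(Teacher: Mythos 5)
Your proposal is correct and follows essentially the same route as the paper: (ii)$\Leftrightarrow$(iii) is read off from Lemma \ref{was} with $(p;q_1,q_2)=(1;1,2)$, (i)$\Rightarrow$(ii) is the duality argument of Theorem \ref{Teo3.1} applied to $T_A\colon X\to E'$, and (ii)$\Rightarrow$(i) specializes to $X=c_0$ and tests the bilinear form built from $(f_k)_k\in\ell_1^w(E')$ on the unit vector basis, exactly as in the paper. The only difference is your final local-reflexivity step: the paper simply asserts that the estimate $\sum_k|f_k(y_k)|\le C\Vert(f_k)\Vert_{\ell_1^w(E')}\Vert(y_k)\Vert_{\ell_2^w(E)}$ already yields $(f_k)_k\in\ell_2\otimes_\pi E'$, implicitly relying on the duality $(\ell_2^u(E))'=\ell_2\langle E'\rangle=\ell_2\otimes_\pi E'$, and your passage from $\ell_2^w(E)$ to $\ell_2^w(E'')$ via local reflexivity is an equally valid way to justify that step.
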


\begin{proof}$(i) \Longrightarrow (ii)$   Let $A \colon X\times
E\to {\mathbb K}$ be a bounded bilinear form and let $T_A \colon X\to E'$ be the corresponding linear
operator. Assume that $(x_j)_j\in \ell_1^w(X)$ and $(y_j)_j\in \ell_2^w(E)$.
\begin{eqnarray*}
\sum_{j}|A(x_j,y_j)|&=& \sum_{j}| T_A(x_j)(y_j)|\\
&=& \sup_{|\alpha_j|=1}|\sum_{j} T_A(x_j)(\alpha_jy_j)|\\
&\le& \Vert( T_A(x_j))_{j}\Vert_{\ell_2\otimes_\pi (E')}\Vert(y_j)_j\Vert_{\ell_2^w(E)}\\
&\le & C\Vert( T_A(x_j))_{j}\Vert_{\ell_1^w(E')}\Vert(y_j)_j\Vert_{\ell_2^w(E)}\\
&\le & C\Vert A\Vert\Vert (x_j)_{j}\Vert_{\ell_1^w(X)}\Vert(y_j)_j\Vert_{\ell_2^w(E)}.
\end{eqnarray*}
$(ii) \Longrightarrow (i)$  Let $(x'_j)_j\in \ell_1^w(E')$ be given. Consider the bounded bilinear
map $A:c_0\times E\to {\mathbb K}$ defined by the condition $A(e_j,x)= x'_j(x)$ for $x\in E$.
To show that $(x'_j)_j\in \ell_2\otimes_\pi (E')$ it suffices to
see that
$$\sum_j | x'_j(x_j)|\le C \Vert
(x_j)_j\Vert_{\ell_2^w(E)}$$ and, using $X=c_0$ in the assumption,
this follows using that
$$\sum_j | x'_j(x_j)|= \sum_j |A( e_j,x_j)|\le \|A\| \Vert(e_j)_j\|_{\ell_1^w(c_0)} \Vert
(x_j)_j\Vert_{\ell_2^w(E)}.$$
$(ii) \Longleftrightarrow (iii)$ It is a particular case in Lemma \ref{was}.
\end{proof}

The same idea used in the proof of Theorem \ref{gen} provides the following
slight improvement:

\begin{theorem}
Let $n$ be a positive integer. For $i=1,\ldots,2n+1$ let $E_{i}$ be a Banach
space and $1\leq r_{2n+1}\leq r_{1},\ldots,r_{2n}\leq2$. If
\[
\mathcal{L}(E_{1},E_{2};\mathbb{K})=\Pi_{(1;r_{1},r_{2})}(E_{1},E_{2};\mathbb{K})\ \text{and}%
\ \pi_{(1;r_{1},r_{2})}\leq C_{2}\Vert\cdot\Vert,
\]%
\[
\mathcal{L}(E_{3},E_{4};\mathbb{K})=\Pi_{(1;r_{3},r_{4})}(E_{3},E_{4};\mathbb{K})\ \text{and}%
\ \pi_{(1;r_{3},r_{4})}\leq C_{4}\Vert\cdot\Vert,\,\,\ldots
\]%
\[
\mathcal{L}(E_{2n-1},E_{2n};\mathbb{K})=\Pi_{(1;r_{2n-1},r_{2n})}(E_{2n-1},E_{2n};\mathbb{K}%
)\ \text{and}\ \pi_{(1;r_{2n-1},r_{2n})}\leq C_{2n}\Vert\cdot\Vert,
\]
then
\[
\mathcal{L}(E_{1},\ldots,E_{2n};\mathbb{K}) =\Pi_{(1;r_{1},\ldots,r_{2n})}(E_{1}%
,\ldots,E_{2n};\mathbb{K}) {\ ~and~} \pi_{(1;r_{1},\ldots,r_{2n})}
\leq C_{2}\cdots C_{2n}\Vert\cdot\Vert,
\]
\[
\mathcal{L}(E_{1},\ldots,E_{2n+1};\mathbb{K}) =\Pi_{(r_{2n+1};r_{1},\ldots,r_{2n+1}%
)}(E_{1},\ldots,E_{2n+1};\mathbb{K}) ~and~
\pi_{(r_{2n+1};r_{1},\ldots,r_{2n+1})}\leq C_{2}\cdots
C_{2n}\Vert\cdot\Vert.
\]


\end{theorem}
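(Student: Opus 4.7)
The plan is to mirror the proof of Theorem \ref{gen} verbatim, replacing the diagonal $E=E_1=\cdots=E_n$ and the single constant $C$ by the pairing structure $(E_{2i-1},E_{2i})$ with its own constant $C_{2i}$. The even case rests on three ingredients: associativity of $\hat\otimes_\pi$, the Defant-Voigt theorem applied to an auxiliary $n$-linear form, and the tensor-product estimate provided by Lemma \ref{was}. The odd case will be reduced to the even case exactly as in Theorem \ref{gen}(ii), through \cite[Corollary 3.2]{Port} and the Inclusion Theorem.

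For the even case, given $A\in\mathcal{L}(E_1,\ldots,E_{2n};\mathbb{K})$, one first invokes the universal property of the projective tensor product pairwise to obtain an $n$-linear form $B\in\mathcal{L}(E_1\hat\otimes_\pi E_2,\ldots,E_{2n-1}\hat\otimes_\pi E_{2n};\mathbb{K})$ satisfying $\|B\|=\|A\|$ and
\[
B(x^1\otimes x^2,\ldots,x^{2n-1}\otimes x^{2n})=A(x^1,\ldots,x^{2n}).
\]
Applying Defant-Voigt (identity (\ref{DV})) to $B$, for finite families $(x^i_j)_j$ one then obtains
\[
\sum_j |A(x_j^1,\ldots,x_j^{2n})|=\sum_j |B(x_j^1\otimes x_j^2,\ldots,x_j^{2n-1}\otimes x_j^{2n})|\leq \|B\|\prod_{i=1}^{n}\|(x_j^{2i-1}\otimes x_j^{2i})_j\|_{\ell_1^w(E_{2i-1}\hat\otimes_\pi E_{2i})}.
\]
For each pair $(E_{2i-1},E_{2i})$, the implication $(i)\Longrightarrow(iii)$ of Lemma \ref{was} combined with the hypothesis $\mathcal{L}(E_{2i-1},E_{2i};\mathbb{K})=\Pi_{(1;r_{2i-1},r_{2i})}(E_{2i-1},E_{2i};\mathbb{K})$ delivers
\[
\|(x_j^{2i-1}\otimes x_j^{2i})_j\|_{\ell_1^w(E_{2i-1}\hat\otimes_\pi E_{2i})}\leq C_{2i}\,\|(x_j^{2i-1})_j\|_{\ell_{r_{2i-1}}^w(E_{2i-1})}\,\|(x_j^{2i})_j\|_{\ell_{r_{2i}}^w(E_{2i})}.
\]
Multiplying these $n$ estimates gives the desired inequality with constant $C_2\cdots C_{2n}$.

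For the odd case, I would plug the even-case conclusion $\mathcal{L}(E_1,\ldots,E_{2n};\mathbb{K})=\Pi_{(1;r_1,\ldots,r_{2n})}(E_1,\ldots,E_{2n};\mathbb{K})$ into \cite[Corollary 3.2]{Port} to incorporate the extra factor $E_{2n+1}$ at exponent $1$, reaching $\mathcal{L}(E_1,\ldots,E_{2n+1};\mathbb{K})=\Pi_{(1;r_1,\ldots,r_{2n},1)}(E_1,\ldots,E_{2n+1};\mathbb{K})$ with the same constant $C_2\cdots C_{2n}$. A final application of the Inclusion Theorem raises the outer exponent from $1$ to $r_{2n+1}\geq 1$ and the last inner exponent from $1$ to $r_{2n+1}$: since the deficit $\sum_i 1/p_i-1/p$ is preserved under this move, the hypothesis of the Inclusion Theorem is met and the constant is not worsened.

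The main obstacle is bookkeeping rather than analysis: one must verify that the constant $C_2\cdots C_{2n}$ travels intact through the partial linearization (which is isometric, because the projective tensor norm is tailored to linearize multilinear forms isometrically), through Defant-Voigt (which on scalar-valued forms contributes only the factor $\|B\|=\|A\|$), and through \cite[Corollary 3.2]{Port} together with the Inclusion Theorem (both giving inequalities between summing norms in the correct direction). No new analytic ingredient beyond Lemma \ref{was} and the pairing trick is required.
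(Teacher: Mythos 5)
Your proposal is correct and is essentially the paper's own argument: the paper gives no separate proof, saying only that ``the same idea used in the proof of Theorem \ref{gen}'' applies, and your pairwise linearization via the projective tensor product, Defant--Voigt applied to the auxiliary $n$-linear form $B$, the estimate from Lemma \ref{was} for each pair $(E_{2i-1},E_{2i})$ with constant $C_{2i}$, and the reduction of the odd case through \cite[Corollary 3.2]{Port} and the Inclusion Theorem is precisely that adaptation, with the constants tracked correctly.
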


\section{The role of almost summing mappings}

\label{almost}

Let $n\geq2$, $A\in\mathcal{L}(E_{1},\ldots,E_{n};F)$ and $1 \leq k \leq n$.
Recall that the $k$-linear mapping $A_{k}$ is defined by
\[
A_{k} \colon E_{1} \times\cdots\times E_{k} \rightarrow\mathcal{L}%
(E_{k+1},\ldots,E_{n};F)~,~A_{k}(x_{1}, \ldots, x_{k})(x_{k+1},\ldots, x_{n})
= A(x_{1}, \ldots, x_{n}) .
\]

We first mention several connections between absolutely summing and
almost summing multilinear mappings. Clearly $\Pi_{a.s}(E_{1},...,E_{n};F)$ coincides with $\Pi_{(2;2,...2)}%
(E_{1},...,E_{n};F)$ whenever $F$ is a Hilbert space because $Rad(F)=\ell
_{2}(F)$, and the corresponding inclusions hold whenever $F$ has type $p$ or
cotype $q$.

In the linear case one has (see \cite{Diestel}) $\bigcup_{p>0}\Pi_{p}%
(E;F)\subset\Pi_{a.s}(E;F)$. Using this linear containment
relationship and (\ref{fact}) - see also \cite{Nach} - it is not
difficult to see that this relationship also holds for $p$-dominated
multilinear maps, i.e.
\[
\bigcup_{p>0}\Pi_{(p/n;p\ldots,p)}(E_{1},\ldots, E_{n};F)\subset\Pi_{a.s}%
(E_{1},\ldots, E_{n};F).
\]

\begin{proposition}
\label{corgt} Let $A\in\mathcal{L}(E_{1},\ldots,E_{n};\mathbb{K})$ and $A_{n-1}%
\in\mathcal{L}(E_{1},\ldots,E_{n-1}; E_{n}^{\prime})$.

(i) If $A\in\Pi_{(1;2,\ldots,2)}(E_{1},\ldots,E_{n};\mathbb{K})$
then $A_{n-1}\in \Pi_{a.s}(E_{1},\ldots,E_{n-1}; E_{n}^{\prime})$

(ii) If $E_{n}^{\prime}$ is a $GT$-space of cotype 2 and
$A_{n-1}\in\Pi _{a.s}(E_{1},\ldots,E_{n-1}; E_{n}^{\prime})$ then
$A\in\Pi_{(1;2,\ldots ,2)}(E_{1},\ldots,E_{n};\mathbb{K})$.
\end{proposition}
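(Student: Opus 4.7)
I would treat (i) and (ii) separately, as they rely on different mechanisms, both of which exploit duality between $E_{n}$ and $E_{n}'$.

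\textbf{For (i):} Fix finite sequences $(x_j^i)_{j=1}^m\subset E_i$, $i=1,\ldots,n-1$, and set $y_j := A_{n-1}(x_j^1,\ldots,x_j^{n-1})\in E_n'$. By Kahane's inequality it suffices to bound $\int_0^1 \|\sum_j r_j(t) y_j\|_{E_n'}\,dt$. Writing the $E_n'$-norm as a supremum over $B_{E_n}$ and approximating the $t$-dependent supremum by a bounded measurable simple function $v\colon [0,1]\to B_{E_n}$ (absorbing a unimodular factor so that the integrand is non-negative), Fubini's theorem gives
$$\int_0^1 \sum_j r_j(t) A(x_j^1,\ldots,x_j^{n-1},v(t))\,dt \;=\; \sum_j A(x_j^1,\ldots,x_j^{n-1},u_j),$$
where $u_j := \int_0^1 r_j(t) v(t)\,dt\in E_n$. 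The crux is to verify $(u_j)\in \ell_2^w(E_n)$ with norm at most $1$: for any $\varphi\in B_{E_n'}$, the scalars $\varphi(u_j)=\int_0^1 r_j(t)\varphi(v(t))\,dt$ are the Rademacher--Fourier coefficients of $\varphi\circ v\in L^2([0,1])$, so Bessel's inequality yields $\sum_j|\varphi(u_j)|^2\leq \|\varphi\circ v\|_{L^2}^2\leq 1$. Applying the $(1;2,\ldots,2)$-summability of $A$ then closes the estimate.

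\textbf{For (ii):} By (\ref{GT}), the hypothesis on $E_n'$ gives $\ell_2\otimes_\pi E_n'=Rad(E_n')$ with equivalent norms. Choose signs $|\varepsilon_j|=1$ so that $\sum_j|A(x_j^1,\ldots,x_j^n)|=\sum_j A(\varepsilon_j x_j^1,x_j^2,\ldots,x_j^n)$. Mimicking the duality argument in the proof of Theorem \ref{Teo3.1}, associate to $(x_j^n)\in\ell_2^w(E_n)$ the operator $T\in\mathcal{L}(\ell_2,E_n)$ with $T(e_j)=x_j^n$, and view it as a functional $\varphi_T\in(\ell_2\otimes_\pi E_n')'$ of norm at most $\|(x_j^n)\|_{\ell_2^w(E_n)}$. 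Then
$$\sum_j A(\varepsilon_j x_j^1,\ldots,x_j^n) \;=\; \varphi_T\Big(\sum_j e_j\otimes A_{n-1}(\varepsilon_j x_j^1,\ldots,x_j^{n-1})\Big),$$
which is bounded by $\|(x_j^n)\|_{\ell_2^w(E_n)}$ times the $\ell_2\otimes_\pi E_n'$-norm of the tensor. The norm equivalence with $Rad(E_n')$ and the almost-summing hypothesis on $A_{n-1}$ (using that the signs $\varepsilon_j$ do not change $\ell_2^w$-norms) finish the proof.

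\textbf{Main obstacle:} The only genuinely subtle point is in (i): extracting the auxiliary vectors $u_j\in E_n$ and showing that they form a weakly $\ell_2$-summable sequence with controlled norm. Once one recognises that the $\varphi(u_j)$ are Rademacher--Fourier coefficients of $\varphi\circ v$, Bessel's inequality makes this painless, and no deep measurable-selection machinery is needed (a simple-function approximation up to $\varepsilon$ suffices). Everything else (Kahane's inequality, Fubini, and the sign/duality trick used in (ii)) is standard material already exploited in Theorem~\ref{Teo3.1}.
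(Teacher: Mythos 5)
Your proof is correct. Part (ii) follows the paper's own argument almost verbatim: the sign choice $\varepsilon_j$, the identification of $(x_j^n)_j\in\ell_2^w(E_n)$ with an operator in $\mathcal{L}(\ell_2;E_n)$ acting as a functional on $\ell_2\otimes_\pi E_n'$, and then the equality $\ell_2\otimes_\pi E_n'=Rad(E_n')$ from (\ref{GT}) together with the almost summing hypothesis on $A_{n-1}$. Part (i), however, takes a genuinely different route. The paper estimates $\Vert(A_{n-1}(x_j^1,\ldots,x_j^{n-1}))_j\Vert_{Rad(E_n')}$ by first passing to the projective tensor norm, using the cited inclusion $\ell_2\otimes_\pi E_n'=\ell_2\langle E_n'\rangle\subset Rad(E_n')$ of \cite{AB1}, and then computing that projective norm by duality against the unit ball of $\ell_2^w(E_n)$, at which point the $(1;2,\ldots,2)$-summability of $A$ applies. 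You instead dualize the $Rad_1$-norm directly from its definition: a near-optimizing simple selection $v\colon[0,1]\to B_{E_n}$ (legitimate, since $t\mapsto\sum_j r_j(t)y_j$ is a step function), the vectors $u_j=\int_0^1 r_j(t)v(t)\,dt$, and Bessel's inequality against the orthonormal Rademacher system to get $\Vert(u_j)_j\Vert_{\ell_2^w(E_n)}\le 1$, after which the summability of $A$ and Kahane's inequality finish. This is self-contained: in effect you reprove, for finite sequences, exactly the estimate that the paper imports from the Cohen-space machinery ($\ell_2\langle E'\rangle\subset Rad(E')$ plus the duality $(\ell_2\otimes_\pi E_n')'\supset\mathcal{L}(\ell_2;E_n)$), at the modest cost of an extra Kahane constant. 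What the paper's formulation buys in exchange is that it makes the symmetry with (ii) transparent, since the GT/cotype-2 hypothesis there is precisely what upgrades the inclusion $\ell_2\otimes_\pi E_n'\subset Rad(E_n')$ to an equality.
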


\begin{proof}
(i) Assume $A\in \Pi_{(1;2,\ldots,2)}(E_1,\ldots,E_{n};\mathbb{K})$.
Using that $\ell_2\otimes_\pi F\subset Rad(F)$ one has
\begin{eqnarray*}\Vert (
A_{n-1}(x_j^1,\ldots,x_j^{n-1}))_j\Vert_{Rad(E_n')}&\le & C\Vert
(A_{n-1}(x_j^1,\ldots,x_j^{n-1}))_j\Vert_{\ell_2\otimes_\pi
E'_n}\\&=&\sup_{\|(x_j^n)_j\|_{\ell_2^w(E_n)}=1} |\sum_j  A_{n-1}
(x_j^1,\ldots,x_j^{n-1})(x_j^n)|\\
&\le & \pi_{(1;2,\ldots,2)}(A)\prod_{i=1}^{n-1}\|(x_j^i)_j\|_{\ell_2^w(E_i)}.
\end{eqnarray*}
(ii)Assume that $A_{n-1}\in \Pi_{a.s}(E_1,\ldots,E_{n-1}; E_n')$.
From (\ref{GT}) one has $\ell_2\otimes_\pi E'_n= Rad(E_n')$. Hence
we obtain, for any $|\alpha_j|=1$,
\begin{eqnarray*} \sum_j A(\alpha_jx_j^1,\ldots,x_j^n)
 & = & \sum_j A_{n-1}(\alpha_jx_j^1,\ldots,x_j^{n-1})(x_j^n)
\\ & \le & \Vert
( A_{n-1}(\alpha_jx_j^1,\ldots,x_j^{n-1}))_j\Vert_{\ell_2\otimes_\pi
E'_n}\|(x_j^n)_j\|_{\ell_2^w(E_n)}\\
& \le & C\Vert (
A_{n-1}(\alpha_jx_j^1,\ldots,x_j^{n-1}))_j\Vert_{Rad(E_n')}\|(x_j^n)_j\|_{\ell_2^w(E_n)}\\
& \le &C\|A_{n-1}\|_{a.s}\prod_{i=1}^n\|(x_j^i)_j\|_{\ell_2^w(E_i)}
\end{eqnarray*}
\end{proof}

\begin{theorem}
\label{as} Let $1\le k<n$ and
$A\in{\mathcal{L}}(E_{1},...,E_{n};\mathbb{K})$ be such that
\[
A_{k}\in\Pi_{a.s}(E_{1},...,E_{k};{\mathcal{L}}(E_{k+1},..,E_{n};\mathbb{K})).
\]
Then,
\[
\hat A:\ell_{2}^{w}(E_{1})\times...\times\ell_{2}^{w}(E_{k})\times
Rad(E_{k+1}) \times\cdots\times Rad(E_{n}) \to\ell_{1}%
\]
is bounded. Moreover $\|\hat A\|\le\|A_{k}\|_{a.s}.$
\end{theorem}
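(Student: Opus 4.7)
The plan is to adapt the Rademacher--orthogonality argument from Proposition \ref{primerlema} to this mixed setting, treating the first $k$ slots via the almost summing hypothesis on $A_k$ and the remaining $n-k$ slots via their $Rad$-norms. Given finite sequences $(x_j^i)$ in $E_i$, first choose unimodular scalars $\alpha_j$ so that
\[
\sum_j |A(x_j^1,\ldots,x_j^n)|=\sum_j \alpha_j A_k(x_j^1,\ldots,x_j^k)(x_j^{k+1},\ldots,x_j^n),
\]
and set $B_j:=\alpha_j A_k(x_j^1,\ldots,x_j^k)\in\mathcal{L}(E_{k+1},\ldots,E_n;\mathbb{K})$. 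The problem reduces to bounding $\sum_j B_j(x_j^{k+1},\ldots,x_j^n)$ by $\|(B_j)_j\|_{Rad(\mathcal{L}(E_{k+1},\ldots,E_n;\mathbb{K}))}$ times the product of the $Rad$-norms of $(x_j^i)_j$ for $i=k+1,\ldots,n$.

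To achieve this, for parameters $t_0,t_{k+1},\ldots,t_{n-1}\in[0,1]$ define
\[
g_0(t_0)=\sum_j r_j(t_0)B_j,\qquad g_i(t_i)=\sum_j r_j(t_i)x_j^i\quad (k+1\le i\le n-1),
\]
\[
g_n(t_0,t_{k+1},\ldots,t_{n-1})=\sum_j r_j(t_0)r_j(t_{k+1})\cdots r_j(t_{n-1})x_j^n.
\]
A direct Rademacher-orthogonality computation on $[0,1]^{n-k}$ (the extra indices $j_0,j_{k+1},\ldots,j_{n-1},j_n$ are forced to coincide by pairwise integration of Rademachers) yields
\[
\int_{[0,1]^{n-k}} g_0(t_0)\bigl(g_{k+1}(t_{k+1}),\ldots,g_{n-1}(t_{n-1}),g_n(t_0,t_{k+1},\ldots,t_{n-1})\bigr)\,dt_0\,dt_{k+1}\cdots dt_{n-1}=\sum_j B_j(x_j^{k+1},\ldots,x_j^n).
\]

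Pointwise the integrand is dominated by $\|g_0(t_0)\|\,\|g_n(t_0,\ldots)\|\prod_{i=k+1}^{n-1}\|g_i(t_i)\|$. Applying Cauchy--Schwarz in $t_0$ to the two factors that depend on $t_0$, the first produces exactly $\|(B_j)_j\|_{Rad_2(\mathcal{L}(\ldots))}$, while the contraction principle (the extra signs $r_j(t_{k+1})\cdots r_j(t_{n-1})$ have modulus one) bounds the second by $\|(x_j^n)_j\|_{Rad_2(E_n)}$. The remaining one-dimensional integrals over $t_{k+1},\ldots,t_{n-1}$ give $Rad_1$-norms of $(x_j^i)_j$, which are majorised by the corresponding $Rad=Rad_2$ norms. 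Assembling these estimates,
\[
\sum_j |A(x_j^1,\ldots,x_j^n)|\le \|(B_j)_j\|_{Rad(\mathcal{L}(\ldots))}\prod_{i=k+1}^n\|(x_j^i)_j\|_{Rad(E_i)}.
\]

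To finish, the contraction principle (applied to $|\alpha_j|=1$) combined with the almost summing hypothesis on $A_k$ gives
\[
\|(B_j)_j\|_{Rad(\mathcal{L}(\ldots))}\le \|(A_k(x_j^1,\ldots,x_j^k))_j\|_{Rad(\mathcal{L}(\ldots))}\le \|A_k\|_{a.s}\prod_{i=1}^k\|(x_j^i)_j\|_{\ell_2^w(E_i)},
\]
delivering both the boundedness of $\hat A$ and the norm estimate $\|\hat A\|\le\|A_k\|_{a.s}$. I expect the main technical point to be verifying the orthogonality identity that collapses the $(n-k)$-fold integral to the diagonal sum; this is a direct generalisation of the identity behind Proposition \ref{primerlema}, with the auxiliary variable $t_0$ coupling to the last slot through $g_n$ and taking over the role played by $t_1$ there.
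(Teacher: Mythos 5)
Your proposal is correct and follows essentially the same route as the paper's proof: your variable $t_0$ is the paper's $t_k$, your $g_0,g_i,g_n$ are exactly the paper's $f_\alpha,f_i,f_n$, and the orthogonality identity, the Cauchy--Schwarz step in the coupled variable, the contraction-principle bound for the last factor, and the $Rad_1\le Rad_2$ estimates all coincide with the paper's argument. One small refinement: instead of invoking the contraction principle for the unimodular (possibly complex) scalars $\alpha_j$ --- which in the complex case costs a constant --- write $\alpha_j A_k(x_j^1,\ldots,x_j^k)=A_k(\alpha_j x_j^1,x_j^2,\ldots,x_j^k)$ and use that $\Vert(\alpha_j x_j^1)_j\Vert_{\ell_2^w(E_1)}=\Vert(x_j^1)_j\Vert_{\ell_2^w(E_1)}$, which is what yields the exact estimate $\Vert\hat A\Vert\le\Vert A_k\Vert_{a.s}$ and is how the paper proceeds.
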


\begin{proof} Let $(x_{j}^{i})_{j}$ be a finite sequence in $E_{i}$ for $i=1,\ldots
,n.$ Take a scalar sequence $(\alpha_{j})_{j}$, denote $A^j_k=
A_k(x_j^1,x_j^2,...,x_j^k)$ and define
$$f_\alpha(t_k)=\sum_j\alpha_j A^j_kr_j(t_k);~f_i(t_i)=\sum_j r_j(t_i)x_j^i,~ i=k+1,\ldots,n-1;~{\rm and}$$
$$f_{n}(t_{k},\ldots,t_{n-1})=\sum_j r_j(t_{k})\cdots r_j(t_{n-1})x_j^n, \quad t_{k},\ldots,t_{n-1} \in
[0,1].$$
The orthogonality of the
Rademacher system  shows that
\begin{eqnarray*}
\lefteqn{\sum_j A(\alpha_jx_j^1,\ldots,x_j^n)}\\ &=&\sum_j
A_k(\alpha_jx_j^1,\ldots,x_j^k)(x_j^{k+1},\cdots,x_j^n) \\ & = & \sum_j \alpha_j
A_k^j(x_j^{k+1},\cdots,x_j^n)
\\&=&\int_0^1\cdots\int_0^1
f_\alpha(t_k)( f_{k+1}(t_{k+1}),\ldots, f_{n-1}(t_{n-1}),f_n(t_{k},\ldots,t_{n-1}))dt_k \cdots dt_{n-1}\\
&\le&\int_0^1\cdots\int_0^1 \Big(\int_0^1\| f_\alpha(t_k)\|
\|f_n(t_{k},\ldots,t_{n-1})\|dt_k\Big) \|
f_{k+1}(t_{k+1})\|\cdots \|f_{n-1}(t_{n-1})\|\  dt_{k+1}\cdots dt_{n-1} \\
&\le&\|A_k\|_{a.s}\prod_{i=1}^k\|(x_j^i)_j\|_{\ell_2^w(E_i)}\\
&.&\int_0^1\cdots\int_0^1 \Big(\int_0^1 \|f_{n}(t_k,...,t_{n-1})\|^2dt_{k}\Big)^{1/2}
\| f_{k+1}(t_{k+1})\|\cdots \|f_{n-1}(t_{n-1})\| dt_{k+1}\cdots dt_{n-1} \\&\le&   \|A_k\|_{a.s}
\prod_{i=1}^k\|(x_j^i)_j\|_{\ell_2^w(E_i)}
\Big(\prod_{i=k+1}^{n-1}\|(x_j^i)_j\|_{Rad_1}\Big)\|( x_j^n)_j\|_{Rad_2}.
\end{eqnarray*}
This allows to conclude the proof.
\end{proof}

Let us see that Theorem \ref{as} has nice consequences.

\begin{theorem}
\label{t2} If $1\le p\le2$ and $E_{2}^{\prime}$ has type $2$, then%
\[
\mathcal{L}(\ell_{p},E_{2};\mathbb{K})=\Pi_{(p;2,1)}(\ell_{p},E_{2};\mathbb{K})=\Pi_{(2p/(2+p);1,1)}%
(\ell_{p},E_{2};\mathbb{K})=\Pi_{(r_{p};r_{p},r_{p})}(\ell_{p},E_{2};\mathbb{K}),
\]
for every $1\leq r_{p}\leq\frac{2p}{3p-2}.$
\end{theorem}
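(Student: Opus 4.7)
The strategy is to establish the central coincidence $\mathcal{L}(\ell_p, E_2) = \Pi_{(p;2,1)}(\ell_p, E_2)$ directly, and then to deduce the remaining equalities from Theorem \ref{cotipo} and the Inclusion Theorem.

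To prove the main equality, given a bounded bilinear $A\colon\ell_p\times E_2\to\mathbb{K}$, I would write $\bigl(\sum_j|A(x_j,y_j)|^p\bigr)^{1/p}$ by duality as $\sup\sum_j\gamma_j\alpha_j A(x_j,y_j)$ over $\|\gamma\|_{p'}\le 1$, $\gamma\ge 0$ and suitable unimodular $\alpha_j$. Using orthogonality of the Rademacher system and the pairing $A(x_j,y_j)=\langle A_1(x_j),y_j\rangle$, each fixed sum rewrites as $\int_0^1\langle\sum_j\gamma_j\alpha_j r_j(t) A_1(x_j),\sum_j r_j(t) y_j\rangle\,dt$, which Cauchy--Schwarz bounds by $\|(\gamma_j\alpha_j A_1(x_j))\|_{Rad(E_2^{\prime})}\cdot\|(y_j)\|_{Rad(E_2)}$. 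Type $2$ of $E_2^{\prime}$ controls the first factor by $T_2(E_2^{\prime})\|A\|\bigl(\sum_j|\gamma_j|^2\|x_j\|_{\ell_p}^2\bigr)^{1/2}$, and H\"older's inequality with $\|\gamma\|_{p'}=1$ converts this to $C\|A\|\,\|(x_j)\|_{\ell_s(\ell_p)}$ where $s=2p/(2-p)$. The second factor satisfies $\|(y_j)\|_{Rad(E_2)}\le C\|(y_j)\|_{\ell_1^w(E_2)}$, since $\ell_1^w(E_2)\hookrightarrow Rad(E_2)$ continuously. A classical summing property of $id_{\ell_p}$ (it is absolutely $(s;2)$-summing for $1\le p\le 2$ with $s=2p/(2-p)$, i.e.\ $\ell_2^w(\ell_p)\hookrightarrow\ell_s(\ell_p)$) then upgrades the strong $\ell_s(\ell_p)$-norm to the weak $\ell_2^w(\ell_p)$-norm on the $x$-side, giving $A\in\Pi_{(p;2,1)}$.

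The second equality follows from Theorem \ref{cotipo} with $n=2$, $k=1$, $E_1=\ell_p$ of cotype $2$, $q_1=2$, $p_2=1$, $q=p$ and $p_0=2p/(p+2)$: the exponent condition $\frac{1}{q_1}-\frac{1}{q}=1-\frac{1}{p_0}$ reduces to $\frac{1}{2}-\frac{1}{p}=\frac{p-2}{2p}$, which is immediate. For the third equality, given $r_p\in[1,\frac{2p}{3p-2}]$ the Inclusion Theorem yields $\Pi_{(2p/(p+2);1,1)}\subset\Pi_{(r_p;r_p,r_p)}$, since $2p/(p+2)\le 1\le r_p$ for $p\in[1,2]$ and the condition $\frac{3}{2}-\frac{1}{p}\le\frac{1}{r_p}$ is precisely $r_p\le\frac{2p}{3p-2}$; the reverse containment $\Pi_{(r_p;r_p,r_p)}\subset\mathcal{L}(\ell_p,E_2)$ is trivial, closing the chain. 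The main obstacle is the classical summing passage from strong $\ell_s(\ell_p)$ to weak $\ell_2^w(\ell_p)$ invoked in the first step; a fallback is to establish $\mathcal{L}=\Pi_{(p;2,1)}$ by complex interpolation between $p=1$ (where Grothendieck's theorem makes $A_1\colon\ell_1\to E_2^{\prime}$ absolutely $1$-summing, hence almost summing, so Theorem \ref{as} applies) and $p=2$ (trivial via Defant--Voigt and the Inclusion Theorem), in the spirit of the interpolation argument used in the proof of Theorem \ref{Littlewood}.
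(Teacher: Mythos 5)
Your reductions are sound: the second equality via Theorem \ref{cotipo} (your choice $k=1$, $q_1=2$, $q=p$, $p_0=2p/(p+2)$ checks out, and $\ell_p$ does have cotype $2$) and the third via the Inclusion Theorem agree with the paper, which in fact takes a slightly longer route there (it only extracts the endpoint $s_p=\frac{2p}{3p-2}$ from the Inclusion Theorem and then quotes a theorem of Junek--Matos--Pellegrino to reach smaller $r_p$, whereas your direct application of the Inclusion Theorem already suffices). The genuine gap is in your proof of the first equality $\mathcal{L}(\ell_p,E_2;\mathbb{K})=\Pi_{(p;2,1)}(\ell_p,E_2;\mathbb{K})$. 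The ``classical summing property'' you invoke --- that $id_{\ell_p}$ is $(s;2)$-summing with $s=2p/(2-p)$, i.e.\ $\ell_2^w(\ell_p)\hookrightarrow\ell_s(\ell_p)$ --- is false for $1\le p<2$. For $p=1$ it would say that $id_{\ell_1}$ is $(2;2)$-summing, contradicting the Dvoretzky--Rogers theorem; quantitatively, take $T=n^{-1/2}H_n\colon\ell_2^n\to\ell_1^n$ with $H_n$ a Hadamard matrix: then $\Vert(Te_j)_j\Vert_{\ell_2^w(\ell_1)}=\Vert T\Vert=\sqrt n$ while $\bigl(\sum_j\Vert Te_j\Vert_1^2\bigr)^{1/2}=n$. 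The same scaling (viewing $T$ into $\ell_p^n$) shows $id_{\ell_p}$ is not $(s;2)$-summing for any finite $s$ when $p<2$; beyond what cotype $2$ gives (the $(2;1)$-summing property) there is nothing of this kind. So your direct argument only proves the inequality with the strong norm $\Vert(x_j)\Vert_{\ell_s(\ell_p)}$ on the right-hand side, which does not yield $(p;2,1)$-summability.

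Your fallback is much closer to what the paper actually does, but its $p=1$ endpoint is also not correct as stated: Grothendieck's theorem gives $1$-summability of operators from $\ell_1$ into a \emph{Hilbert} space, not into an arbitrary type-$2$ space $E_2'$ (try $E_2=\ell_{4/3}$, $E_2'=\ell_4$: not every operator $\ell_1\to\ell_4$ is $1$-summing). The paper instead uses exactly the type-$2$ hypothesis to conclude that $A_1\colon\ell_1\to E_2'$ is \emph{almost summing} (\cite[Theorem 12.10]{Diestel}), and then Theorem \ref{as} together with $\ell_1^w(E_2)\subset Rad(E_2)$ gives $A\in\Pi_{(1;2,1)}$; no detour through $1$-summing operators is available. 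Also, the interpolation step needs more than the $\ell_1(\ell_2)/\ell_2(\ell_1)$ interpolation of Theorem \ref{Littlewood}: the paper fixes $(y_j)_j\in\ell_1^w(E_2)$, regards the resulting linear maps on $\ell_2^w(\ell_2)=\mathcal{L}(\ell_2;\ell_2)$ and $\ell_2^w(\ell_1)=\mathcal{L}(\ell_2;\ell_1)$, and invokes Pisier's result that $\ell_2^w(\ell_p)\subset(\ell_2^w(\ell_2),\ell_2^w(\ell_1))_{[\theta]}$ with $\frac{\theta}{2}=1-\frac1p$, handling real scalars by complexification. With those two repairs (almost summing at $p=1$, Pisier's containment for the interpolation) your fallback becomes the paper's proof; as written, both of your routes to the first equality have gaps.
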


\begin{proof}
We only treat the case $\mathbb{K}=\mathbb{C}.$ The case $\mathbb{K}%
=\mathbb{R}$ follows from a complexification argument (see \cite{Junek-preprint, Thesis} for
details).

Assume first that $p=1$. Let $A\in\mathcal{L}(\ell_{1},E_{2};\mathbb{K})$. Since $E_{2}^{\prime}$ has type
$2,$ it follows from \cite[Theorem 12.10]{Diestel} that $A_{1}\in\Pi_{a.s}(\ell_{1};E_{2}^{\prime})$. So,
from
the previous theorem it follows that%
\[
\hat{A}\colon\ell_{2}^{w}(\ell_{1})\times\ell_{1}^{w}(E_{2})\rightarrow
\ell_{1}%
\]
is bounded. Hence $A\in\Pi_{(1;2,1)}(\ell_{1},E_{2};\mathbb{K})$. On the other
hand, from the inclusion theorem we know that $\mathcal{L}(\ell_{2}%
,E_{2};\mathbb{K})=\Pi_{(2;2,1)}(\ell_{2},E_{2};\mathbb{K})$.

Let now $1\leq p\leq2$ and $A\in\mathcal{L}(\ell_{p},E_{2};\mathbb{K})$. Fix $(y_{j})\in\ell_{1}^{w}(E_{2})$ and consider the linear mappings%
\[
T^{(1)}\colon\ell_{2}^{w}(\ell_{2})\rightarrow\ell_{2}\text{ and }%
T^{(2)}\colon\ell_{2}^{w}(\ell_{1})\rightarrow\ell_{1}\text{ }%
\]
given by
\[
T^{(k)}((x_{j})_{j})=(A(x_{j},y_{j}))_{j}\text{ for }k=1,2.
\]
Clearly $T^{(1)}$ and $T^{(2)}$ are well-defined and continuous. Using that
$\ell_{2}^{w}(\ell_{t})=\mathcal{L}(\ell_{2};\ell_{t})$ for $t=1,2$, \cite[proof of the
Theorem]{Pisier} gives that
\[
\ell_{2}^{w}(\ell_{p})\subset(\ell_{2}^{w}(\ell_{2}),\ell_{2}^{w}(\ell
_{1}))_{\theta}%
\]
for $\frac{\theta}{2}=1-\frac{1}{p}$. So the complex interpolation method implies that%
\begin{align*}
T &  \colon\ell_{2}^{w}(\ell_{p})\rightarrow\ell_{p}\\
T((x_{j})_{j}) &  =(A(x_{j},y_{j}))_{j}%
\end{align*}
is continuous. It follows that $A\in\Pi_{(p;2,1)}(\ell_{p},E_{2};\mathbb{K}%
)$. Since $E_{2}^{\prime}$ has type $2$, it follows from \cite[page 220]{Diestel} that $E_{2}$ has cotype
$2$. Now use Theorem \ref{cotipo} to obtain
$\Pi_{(p;2,1)}(\ell_{p},E_{2};\mathbb{K})=\Pi_{(2p/(2+p);1,1)}(\ell_{p},E_{2};\mathbb{K})$.
Using the inclusion theorem once again one has
$$\Pi_{(\frac{2p}{2+p};1,1)}%
(\ell_{p},E_{2};\mathbb{K})\subset\Pi_{(s_{p};s_{p},s_{p})}(\ell_{p},E_{2};\mathbb{K})$$ for
$2-\frac{2+p}{2p}=\frac{1}{s_{p}}$, which gives us $s_{p}=\frac{2p}{3p-2}$. So, since $1\leq s_{p}\leq2,$
from \cite[Theorem
3]{Junek} it follows that $\Pi_{(r_{p};r_{p},r_{p})}(\ell_{p},E_{2};\mathbb{K}%
)=\mathcal{L}(\ell_{p},E_{2};\mathbb{K})$ whenever $1\leq r_{p}\leq
s_{p}.$
\end{proof}

\begin{corollary}
\label{lp1} If $1\leq p\leq2$ and $1<q\leq2$ then

(i)
$\mathcal{L}(\ell_{p},\ell_{q};\mathbb{K})=\Pi_{(p;2,1)}(\ell_{p},\ell_{q};\mathbb{K}).$

(ii)
$\mathcal{L}(\ell_{1},\ell_{q};\mathbb{K})=\Pi_{(r;r,r)}(\ell_{1},\ell_{q};\mathbb{K})$
for $1\leq r\leq2$.
\end{corollary}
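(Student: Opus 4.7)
The plan is to derive both statements as immediate specializations of Theorem \ref{t2} with $E_2=\ell_q$. The only hypothesis of that theorem to verify is that $E_2'$ has type $2$. Since $1<q\leq 2$, we have $\ell_q'=\ell_{q'}$ with $q'\in[2,\infty)$, and $\ell_s$ is well known to have type $2$ whenever $s\geq 2$ (see, e.g., \cite[Chapter 11]{Diestel}). Therefore the full conclusion of Theorem \ref{t2} applies to $\mathcal{L}(\ell_p,\ell_q;\mathbb{K})$ for every $1\leq p\leq 2$.

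For part (i), the equality $\mathcal{L}(\ell_p,\ell_q;\mathbb{K})=\Pi_{(p;2,1)}(\ell_p,\ell_q;\mathbb{K})$ is literally the first equality appearing in the statement of Theorem \ref{t2}, so there is nothing further to do.

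For part (ii), I specialize to $p=1$. The bound on $r_p$ in Theorem \ref{t2} becomes
\[
1\leq r\leq \frac{2p}{3p-2}\bigg|_{p=1}=\frac{2}{1}=2,
\]
so the last equality in Theorem \ref{t2} gives $\mathcal{L}(\ell_1,\ell_q;\mathbb{K})=\Pi_{(r;r,r)}(\ell_1,\ell_q;\mathbb{K})$ for every $r\in[1,2]$, which is precisely (ii).

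There is no real obstacle: the corollary is a direct translation of Theorem \ref{t2} once one notes the type~$2$ property of $\ell_{q'}$ for $q'\geq 2$. The only mildly delicate point worth mentioning explicitly is that the type~$2$ assumption on $\ell_q'$ is what triggers the cotype~$2$ of $\ell_q$ needed to invoke Theorem \ref{cotipo} inside the proof of Theorem \ref{t2}; both of these ingredients are automatic for the range $1<q\leq 2$.
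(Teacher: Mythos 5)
Your proposal is correct and is exactly the intended argument: the paper states the corollary without proof as an immediate specialization of Theorem \ref{t2} to $E_2=\ell_q$, using that $\ell_q'=\ell_{q'}$ has type $2$ for $q'\geq 2$, and your computation $\frac{2p}{3p-2}\big|_{p=1}=2$ for part (ii) matches the paper's bound.
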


The following result (for $n$-linear mappings) can also be obtained using
results from \cite{Port} and the idea of the proof of Theorem \ref{t2}.

\begin{proposition}
Let $n\geq2$ and $1<p\leq2.$ Then every $n$-linear mapping
$A\in\mathcal{L}(\ell_{1},\overset{n-1}{\ldots},\ell_{1},\ell_{p};\mathbb{K})$ is
$(r_{n};r_{n},\ldots,r_{n})$-summing for every $1\leq r_{n}\leq \frac{2^{n-1}}{2^{n-1}-1}$.
\end{proposition}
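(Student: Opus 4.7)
I would deduce the result directly, without induction on $n$, by chaining the bilinear coincidence of Theorem~\ref{t2} with \cite[Corollary 3.2]{Port}, Theorem~\ref{cotipo}, and the Inclusion Theorem.

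First, since $1<p\le 2$ forces $\ell_p'=\ell_{p'}$ to have type~$2$, I apply Theorem~\ref{t2} with its parameter ``$p$'' set equal to $1$ and $E_2=\ell_p$; this yields the bilinear coincidence $\mathcal{L}(\ell_1,\ell_p;\mathbb{K})=\Pi_{(1;2,1)}(\ell_1,\ell_p;\mathbb{K})$. Next, \cite[Corollary 3.2]{Port} (the lifting result invoked in the Corollary after Theorem~\ref{cotipo}) extends this to $\mathcal{L}(\ell_1,\overset{n-1}{\ldots},\ell_1,\ell_p;\mathbb{K})=\Pi_{(1;2,1,1,\ldots,1)}$, where the $n-2$ extra arguments pick up summing index~$1$. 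Since we are in the scalar-valued setting, arguments may be permuted freely, so we may place the summing index~$2$ at any chosen $\ell_1$-slot.

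Because $\ell_1$ has cotype~$2$, I then apply Theorem~\ref{cotipo} with $k=1$ at the slot carrying summing index~$2$. Solving $\tfrac{1}{q_1}-\tfrac{1}{q}=1-\tfrac{1}{p}$ with $q_1=2$ and $q=1$ gives the new outer index $p=2/3$, so
\[
\Pi_{(1;2,1,\ldots,1)}=\Pi_{(2/3;1,1,\ldots,1)},
\]
and every such $A$ is $(2/3;1,\ldots,1)$-summing. The value $p=2/3<1$ is legitimate because the paper admits $(p;p_1,\ldots,p_n)$-summing for all $p\in(0,\infty]$.

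Finally, the Inclusion Theorem converts $(2/3;1,\ldots,1)$-summability into $(r_n;r_n,\ldots,r_n)$-summability: its gap condition $n-\tfrac{3}{2}\le\tfrac{n-1}{r_n}$ is equivalent to $r_n\le\tfrac{2(n-1)}{2n-3}$, and the elementary estimate $2^{n-2}\ge n-1$ (an easy induction for $n\ge 2$) gives $\tfrac{2^{n-1}}{2^{n-1}-1}\le\tfrac{2(n-1)}{2n-3}$. Hence the whole range $1\le r_n\le\tfrac{2^{n-1}}{2^{n-1}-1}$ claimed in the proposition is covered. The only step beyond citing earlier results is this arithmetic comparison, which is also the main (very mild) obstacle; everything else is a single application of each of Theorem~\ref{t2}, \cite[Corollary 3.2]{Port}, Theorem~\ref{cotipo}, and the Inclusion Theorem, together with permutation-invariance of scalar-valued multilinear forms.
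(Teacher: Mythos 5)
Your argument is correct, but it follows a genuinely different route from the paper's. The paper first reduces to the endpoint exponent $r_n=\frac{2^{n-1}}{2^{n-1}-1}$ via \cite[Theorem 3 and Remark 2]{Junek} and then proceeds case by case ($n=3,4,\dots$), producing two asymmetric coincidences such as $\Pi_{(1;2,1,1)}$ and $\Pi_{(2;1,2,2)}$ on the $\ell^u$-spaces and gluing them by complex interpolation (via \cite{DM}), with a complexification argument for real scalars. You never interpolate: you lift the single bilinear coincidence $\mathcal{L}(\ell_1,\ell_p;\mathbb{K})=\Pi_{(1;2,1)}$ (Theorem~\ref{t2}, legitimate since $\ell_{p'}$ has type $2$ for $1<p\le 2$) to the $n$-linear level by \cite[Corollary 3.2]{Port} — the permutation of arguments you invoke is harmless and is implicitly used by the paper itself when it writes $\Pi_{(1;2,1,1)}=\Pi_{(1;1,2,1)}$ — then use cotype $2$ of $\ell_1$ through Theorem~\ref{cotipo} (equivalently, the splitting $\ell_1^w(\ell_1)=\ell_2\cdot\ell_2^w(\ell_1)$ plus H\"older) to pass from $(1;2,1,\dots,1)$ to $(2/3;1,\dots,1)$, and finish with the Inclusion Theorem; your index arithmetic is right: the inclusion requires $r_n\le\frac{2(n-1)}{2n-3}$, and $\frac{2^{n-1}}{2^{n-1}-1}\le\frac{2(n-1)}{2n-3}$ is exactly $n-1\le 2^{n-2}$, valid for $n\ge2$. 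Two remarks on what each approach buys: your route is shorter, needs neither interpolation nor complexification nor the reduction via \cite{Junek} (the Inclusion Theorem covers the whole range of $r_n$ at once), and in fact proves the stronger statement that every such $A$ is $(2/3;1,\dots,1)$-summing, hence $(r_n;r_n,\dots,r_n)$-summing for all $1\le r_n\le\frac{2(n-1)}{2n-3}$, which strictly exceeds the stated range for $n\ge4$; the paper's interpolation scheme, on the other hand, is the more flexible template (it is what drives the other results of Section~\ref{1r} and Theorem~\ref{t2} itself for $1<p\le2$ in the first slot), but for this particular proposition it yields a weaker exponent.
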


\begin{proof}  The case $n=2$ is proved in Corollary \ref{lp1}(ii). From
\cite[Theorem 3 and Remark 2]{Junek} it suffices to prove the result for $r_{n}=\frac{2^{n-1}}{2^{n-1}-1}.$

Case $n=3$ and $\mathbb{K}=\mathbb{C}$: Let
$A\in\mathcal{L}(\ell_{1},\ell _{1},\ell_{p};\mathbb{K}).$ From
Corollary \ref{lp1}(i) we know that
\begin{equation}
\mathcal{L}(\ell_{1},\ell_{p};\mathbb{K})=\Pi_{(1;2,1)}(\ell_{1},\ell_{p};\mathbb{K}). \label{31Dez}%
\end{equation}
From (\ref{31Dez}) and \cite[Corollary 3.2]{Port} we get
\[
\mathcal{L}(\ell_{1},\ell_{1},\ell_{p};\mathbb{K})=\Pi_{(1;2,1,1)}(\ell_{1},\ell_{1}%
,\ell_{p};\mathbb{K})=\Pi_{(1;1,2,1)}(\ell_{1},\ell_{1},\ell_{p};\mathbb{K}).
\]
So,%
\begin{equation}
\widehat{A}\colon\ell_{2}^{u}(\ell_{1})\times\ell_{1}^{u}(\ell_{1})\times
\ell_{1}^{u}(\ell_{p})\rightarrow\ell_{1}~ \label{DD1}%
\end{equation}
is bounded. Combining now Corollary \ref{lp1}(ii) with
\cite[Corollary 3.2]{Port} we get
that%
\begin{equation}
\widehat{A}\colon\ell_{1}^{u}(\ell_{1})\times\ell_{2}^{u}(\ell_{1})\times
\ell_{2}^{u}(\ell_{p})\rightarrow\ell_{2} \label{DD2}%
\end{equation}
is bounded. So, using complex interpolation for (\ref{DD1}) and
(\ref{DD2}) we conclude that%
\[
\widehat{A}\colon\ell_{4/3}^{u}(\ell_{1})\times\ell_{4/3}^{u}(\ell_{1}%
)\times\ell_{4/3}^{u}(\ell_{p})\rightarrow\ell_{4/3}%
\]
is bounded (this use of interpolation is based on results of
\cite{DM}, which are closely related to the classical paper
\cite{Kouba} - further details can be found in \cite{Junek}). \\
\indent Case $n=4$ and $\mathbb{K}=\mathbb{C}$: From the case $n=3$
and \cite[Corollary 3.2]{Port} we know that
\begin{equation*}
\mathcal{L}(\ell_{1},\ell_{1},\ell_{1},\ell_{p};\mathbb{K})=\Pi_{(\frac{4}{3};1,\frac
{4}{3},\frac{4}{3},\frac{4}{3})}(\ell_{1},\ell_{1},\ell_{1},\ell_{p};\mathbb{K}).
\label{31Dez2}%
\end{equation*}
Since $\frac{4}{3} < 2$, Corollary \ref{lp1}(i) gives that
$\mathcal{L}(\ell
_{1},\ell_{p};\mathbb{K})=\Pi_{(1;\frac{4}{3},1)}(\ell_{1},\ell_{p};\mathbb{K})$.
So \cite[Corollary 3.2]{Port} implies
\[
\mathcal{L}(\ell_{1},\ell_{1},\ell_{1},\ell_{p};\mathbb{K})=\Pi_{(1;\frac{4}{3}%
,1,1,1)}(\ell_{1},\ell_{1},\ell_{1},\ell_{p};\mathbb{K}).
\]
Hence%
\begin{align*}
\widehat{A}  &  \colon\ell_{1}^{u}(\ell_{1})\times\ell_{\frac{4}{3}}^{u}%
(\ell_{1})\times\ell_{\frac{4}{3}}^{u}(\ell_{1})\times\ell_{\frac{4}{3}}%
^{u}(\ell_{p})\rightarrow\ell_{\frac{4}{3}}~\mathrm{and}\\
\widehat{A}  &  \colon\ell_{\frac{4}{3}}^{u}(\ell_{1})\times\ell_{1}^{u}%
(\ell_{1})\times\ell_{1}^{u}(\ell_{1})\times\ell_{1}^{u}(\ell_{p}%
)\rightarrow\ell_{1}%
\end{align*}
are bounded. Using complex interpolation once more we conclude that%
\[
\widehat{A}\colon\ell_{\frac{8}{7}}^{u}(\ell_{1})\times\ell_{\frac{8}{7}}%
^{u}(\ell_{1})\times\ell_{\frac{8}{7}}^{u}(\ell_{1})\times\ell_{\frac{8}{7}%
}^{u}(\ell_{p})\rightarrow\ell_{\frac{8}{7}}%
\]
is bounded as well. The cases $n>4$ are similar and the real case
follows by complexification.
\end{proof}

\vspace{2mm}

\noindent[Oscar Blasco] Departamento de An\'alisis Matem\'atico, Universidad
de Valencia, 46.100 Burjasot - Valencia, Spain, e-mail: oscar.blasco@uv.es

\medskip

\noindent[Geraldo Botelho] Faculdade de Matem\'atica, Universidade Federal de
Uberl\^andia, 38.400-902 - Uberl\^andia, Brazil, e-mail: botelho@ufu.br

\medskip

\noindent[Daniel Pellegrino] Departamento de Matem\'atica, Universidade Federal da Para\'iba, 58.051-900 -
Jo\~ao Pessoa, Brazil, e-mail: dmpellegrino@gmail.com

\medskip

\noindent[Pilar Rueda] Departamento de An\'alisis Matem\'atico, Universidad de
Valencia, 46.100 Burjasot - Valencia, Spain, e-mail: pilar.rueda@uv.es

\end{document}